\newcommand{\1}{{\mathbbm{1}}}
\newcommand{\R}{\ensuremath{\mathbb{R}}}
\newcommand{\N}{\ensuremath{\mathbb{N}}}
\newcommand{\Q}{\ensuremath{\mathbb{Q}}}
\newcommand{\T}{\ensuremath{\mathrm{T}}}
\newcommand{\vect}[1]{\begin{pmatrix} #1%
\end{pmatrix}}
\newcommand{\wt}[1]{\widetilde{#1}}
\newcommand{\wh}[1]{\widehat{#1}}
\DeclareMathOperator{\linearhull}{span}
\DeclareMathOperator{\supp}{supp}
\DeclareMathOperator{\argmin}{argmin}
\DeclareMathOperator{\divergence}{div}
\DeclareMathOperator{\gradient}{\nabla}
\DeclareMathOperator{\laplace}{\Delta}
\newcommand{\dd}[1]{\frac{\partial}{\partial #1}}
\newcommand{\SO}{\ensuremath{\mathrm{SO}}}
\newcommand{\so}{\mathfrak{so}}
\newcommand{\GL}{\mathrm{GL}}
\newcommand{\Hess}{\mathrm{Hess}}
\newcommand{\e}{\mathrm{e}}
\newcommand{\const}{\mathrm{const}}
\newcommand{\scal}{\mathrm{scal}}
\renewcommand{\O}{{\mathrm{ O}}}
\DeclareMathOperator{\tr}{tr}
\DeclareMathOperator{\sgn}{sgn}
\newcommand{\sol}[1]{{\underline{#1}}}
\newcommand{\CC}{\mathcal{C}}
\newcommand{\liealg}[1]{\ensuremath{\mathfrak{#1}}}
\theoremstyle{plain}
\newtheorem*{definition*}{Definition}%
\newtheorem{definition}{Definition}[section]
\newtheorem{theorem}[definition]{Theorem}
\newtheorem{proposition}[definition]{Proposition}
\newtheorem{lemma}[definition]{Lemma}
\newtheorem*{lemma*}{Lemma}
\newtheorem{corollary}[definition]{Corollary}
\newtheorem*{assumption*}{Hypothesis}
\theoremstyle{definition}
\newtheorem*{remark*}{Remark}
\newtheorem{remark}[definition]{Remark}
\newtheorem*{remarks*}{Remarks}
\newtheorem{remarks}[definition]{Remarks}
\newtheorem{example}[definition]{Example}
\begin{document}
\title[Freezing similarity solutions in multi-dimensional Burgers' Equation]{Freezing similarity solutions in multi-dimensional Burgers' Equation}
\setlength{\parindent}{0pt}
\begin{center}
\normalfont\huge\bfseries{\shorttitle}\\
\vspace*{0.25cm}
\end{center}

\vspace*{0.8cm}
\noindent
\hspace*{.33\textwidth}
\begin{minipage}[t]{0.4\textwidth}
\noindent
\textbf{Jens Rottmann-Matthes}\footnotemark[1] \\
Institut für Analysis \\
Karlsruhe Institute of Technology \\
76131 Karlsruhe \\
Germany
\end{minipage}~\\
\footnotetext[1]{e-mail: \textcolor{blue}{jens.rottmann-matthes@kit.edu}, phone: \textcolor{blue}{+49 (0)721 608 41632}, \\
supported by CRC 1173 'Wave Phenomena: Analysis and Numerics', Karlsruhe Institute of Technology}
~\vspace*{0.6cm}

\noindent
\hspace*{.33\textwidth}
Date: October 12, 2016
\normalparindent=12pt

\vspace{0.4cm}
\begin{center}
\begin{minipage}{0.9\textwidth}
  {\small
  \textbf{Abstract.} 
  The topic of this paper are similarity solutions occurring in
  multi-dimensional Burgers' equation.  We present a simple derivation of
  the symmetries appearing in a family of generalizations of Burgers'
  equation in $d$-space dimensions.  These symmetries we use to derive an
  equivalent partial differential algebraic equation (freezing system)
  that allows us to do long time simulations and obtain good approximations
  of similarity solutions by direct forward simulation.  The method also
  allows us without further effort to observe meta-stable behavior near
  N-wave-like patterns.
  }
\end{minipage}
\end{center}

\noindent
\textbf{Key words.} Similarity solutions, relative equilibria, Burgers'
equation, freezing method, scaling symmetry, meta-stable behavior.

\noindent
\textbf{AMS subject classification.} 65P40, 35B40, 35B06, 37C80 (65M99, 35L65)

\section{Introduction}
\label{sec:intro}
Patterns are abundant in partial differential equations and they often
have an important implication on the interpretation of the system's
behavior.  A simple type of pattern are relative equilibria and the
possibly simplest, non-trivial relative equilibria in partial
differential equations are traveling waves.  Traveling waves for
example appear in the case of the nerve-axon-equations (e.g. in the
Hodgkin-Huxley system) and can be interpreted as the transport of
information.  From this interpretation, it is obvious, that one is not
only interested in the waves shape, but also, how fast it actually
travels and how it evolves in the first place, thus, how fast the
information is actually passed on from its ignition.

One possibility to calculate this by a direct forward simulation, is
the method of freezing, independently introduced in
\cite{BeynThuemmler:2004} and
\cite{RowleyKevrekidisMarsdenLust:2003}, see also
\cite{BeynOttenRottmann:2013}.  The method not only allows to capture
traveling waves by simple long-time simulations, but can also be used for other
relative equilibria such as rotating waves or scroll waves.  
In this article we show how the method can be used to do long-time simulations
of the multi-dimensional Burgers' equation and how similarity solutions of
Burgers' equation can be obtained in this way.
We do not consider the most general and abstract version of the method but only
consider it in such generality as needed for the specific case of Burgers'
equation.  We refer to the dissertation \cite{Rottmann:2010} and also
\cite{BeynOttenRottmann:2013}, which is based on
\cite{BeynThuemmler:2004} and \cite{RowleyKevrekidisMarsdenLust:2003}
for more on the abstract method.  But also note that as in
\cite{RowleyKevrekidisMarsdenLust:2003} and unlike
\cite{BeynThuemmler:2004}, we include a time-scaling.  This will be
crucial for long time simulations and the ability to observe meta-stable
behavior in Burgers' equation.

Burgers' equation,
\begin{equation}
  \label{eq:burgers1d}
  u_t+\left(\tfrac{1}{2}u^2\right)_x=\nu u_{xx},\quad x\in\R,
\end{equation}
was originally introduced by J.M. Burgers
(e.g. \cite{Burgers:1948}) in 1948 as a model of turbulence.  In
\cite{Burgers:1948} he points out that the combination of the dissipative term
$\nu u_{xx}$ and the nonlinear term $uu_x$ characterizes the mechanism of
producing turbulence.  Equation \eqref{eq:burgers1d} is one of the simplest
truly nonlinear partial differential equations and it is well-known that it
produces shock solutions in the inviscid case ($\nu=0$).  Therefore, it is also
frequently used as a test equation for numerical schemes for conservation laws
and for shock-capturing schemes. 
We also consider the following 
multi-dimensional generalizations of Burgers' equation,
\begin{equation}
  \label{eq:burgersdd}
  u_t+\tfrac{1}{p}\divergence(a|u|^p)=\nu\Delta u,\quad x\in\R^d,
\end{equation}
where $d\ge 1$, $a\in\R^d\setminus\{0\}$, $\nu>0$ and $p>1$ are fixed. The
special case of $p=2$ we call the $d$-dimensional Burgers' equation and we call
the case $p=\tfrac{d+1}{d}$ the conservative Burgers' equation.  Note that for
$d=1$ and $a=1$ both special cases reduce to the standard Burgers' equation
\eqref{eq:burgers1d}.  Equation~\eqref{eq:burgersdd} with $p=2$ appears as 
special cases of the multidimensional Burgers' equation
\begin{equation}\label{eq:multiburgers}
  \partial_t \vec{u}+(\vec{u}\cdot\gradient) \vec{u}=\nu\laplace \vec{u},
\end{equation}
which has applications in different areas of physics, see the review
\cite{BecKhanin:2007}.  For example, if
$\vec{u}_1=u$ and $\vec{u}_j=0$ for $j=2,\dots,d$, \eqref{eq:multiburgers} leads
to \eqref{eq:burgersdd} with $a=\e_1$.

We use the symmetries inherent in \eqref{eq:burgersdd} to
split the evolution of the solution into a part that captures the evolution of
the profile and a part that captures the evolution of the solution in the
symmetry group.  This transforms the Cauchy problem for the original partial
differential equation (PDE) \eqref{eq:burgersdd} into an equivalent partial
differential algebraic equation (PDAE), the so-called freezing system.
Stationary solutions to the freezing system PDAE are similarity solutions of
\eqref{eq:burgersdd},  and they not only include the profile of these but also
the full information about the evolution of this profile in the symmetry group.
Moreover, if the similarity solutions are asymptotically stable, they can be
obtained by a direct forward simulation of this PDAE system.  This has been
proved rigorously in several cases, e.g.\ see \cite{Rottmann:2012a}.

From an analytic point of view, the most interesting scale of
viscosities $\nu$ in \eqref{eq:burgers1d} is $\nu\approx 0$.  In
this region the solution to the Cauchy problem for
\eqref{eq:burgers1d} exhibits a metastable behavior in the sense, that
it rapidly approaches a similarity solution of the inviscid
problem, namely an $N$-wave and then has a very long transient until
it finally reaches a true similarity solution of the parabolic
problem, a so called viscosity wave.  This has first been observed
numerically and analyzed in \cite{KimTzavaras:2001} and from a
dynamical systems point of view in \cite{BeckWayne:2009} by means of
the Cole-Hopf-transform.  In both articles the authors have used 
correct asymptotic similarity variables, which can explicitly be calculated for
Burgers' equation.
In contrast to \cite{KimTzavaras:2001} and \cite{BeckWayne:2009}, we take the
point of view that in more complicated equations these similarity variables are
not known a priori and can only be obtained by a numerical calculation.
In fact, our method does not need this information but actually calculates a
suitable choice of similarity variables on the fly. 
The similarity variables obtained by our method may differ from 
the simple scaling variables used in \cite{KimTzavaras:2001} and
\cite{BeckWayne:2009}.  This is because we do not center the solution at $0$
and, moreover, not only allow scalings, but also a
non-zero velocity in the similarity variables.

The plan of the paper is as follows.
In Section~\ref{sec:can_sym} we use a transformation of the coordinates to write the
generalized Burgers' equation \eqref{eq:burgersdd} in a simple canonical form.
For this simple form we then derive a continuous family of symmetries inherent
in the equation which can be used for the numerical calculation.
In Section~\ref{sec:act_gen} we show that the symmetry group obtained in
Section~\ref{sec:can_sym} acts strongly continuous on various function spaces.
This allows us to calculate the generators of the group action on suitable
function spaces.
In Section~\ref{sec:freeze} we then make the ansatz that the evolution of the
solution to \eqref{eq:burgersdd} can be split up into an evolution of the
profile and an evolution along the group orbit to derive the equation in a
co-moving frame.  This ansatz introduces new unknowns into the equation. 
For
example in the case $d=2$, $p=\tfrac{3}{2}$, and $a=\e_1$ in \eqref{eq:burgersdd}
this leads to the following under-determined PDE 
\[v_\tau=\nu \laplace v-v\,v_x+\mu_1 \bigl((xv)_x+(yv)_y)+\mu_2 v_x+\mu_3 v_y\]
for $v$, $\mu_1$, $\mu_2$, and $\mu_3$.
We present two possible choices of phase-conditions to cope with these
artificially
introduced degrees of freedom.  The resulting equation is then the freezing PDAE
and we show that it is equivalent to the original problem.
In the final Section~\ref{sec:exp} we present the numerical results of several
experiments.  These show the ability of the method to calculate similarity
solutions of the multi-dimensional Burgers' equation by direct forward
simulation.  With our method we are also able to observe directly a meta-stable
behavior of the solution not only in the 1d-case as in \cite{KimTzavaras:2001},
but also for the multi-dimensional Burgers' equation with small viscosity
in the vicinity of so called N-wave like patterns.  To our knowledge, this is
the first time, this has been observed in the multi-dimensional case.

Closest to our approach is the article \cite{RowleyKevrekidisMarsdenLust:2003}
where a variant of the method was introduced for different examples, including
the standard 1d Burgers' equation.  But note that in that article neither the
generalization to multi-dimensional problems or more general symmetries as
in \eqref{eq:burgersdd} were considered, nor the long-time or
meta-stable behavior was observed.

Throughout this article we use the following notations and spaces.  We denote
the Euclidean norm in a finite dimensional space by $|\cdot|$ and inner products in $\R^d$ we
write as $\langle\cdot,\cdot\rangle$.  For vectors $x\in\R^d$, $x^\top$ denotes
the transpose, so that $\langle x,y\rangle=x^\top y$ for all $x,y\in\R^d$.  We
interchangeably use the notations $\dd{x_j}u=\partial_{x_j}u=\partial_j u=u_{x_j}$ for the
derivative with respect to $x_j$.  By $D f$ we denote the total derivative (or
Jacobian) of a function $f$ and $D^\alpha f$ denotes the partial derivative
$\partial_{x_1}^{\alpha_1}\dots\partial_{x_d}^{\alpha_d} f$, where $\alpha\in
\N^d$ is a multi-index.  For differentiable maps between manifolds, $f:M\to N$,
we denote by $T_{u_0}f:\T_{u_0}M\to \T_{f(u_0)}N, v\mapsto T_{u_0}f[v]$ the
tangential of the map $f$ at the point $u_0\in M$.

Since we are primarily interested in the approximation of localized (similarity)
solutions, we consider functions, vanishing at infinity in a suitable sense.
Therefore, we consider the space of $k$-times continuously differentiable
functions with compact support,
\[\mathcal{C}_c^k(\R^d)=\left\{u\in\mathcal{C}^k(\R^d):
\supp(u)\subset\R^d \text{ is compact}\right\},\]
and the space of $k$-times continuously differentiable functions vanishing at
infinity, 
\[\mathcal{C}_0^k(\R^d)= \left\{u\in\mathcal{C}^k(\R^d):
  \lim_{|x|\to\infty} |D^\alpha u(x)|=0\;\forall \alpha\in \N^d,
|\alpha|\leq k\right\}.\]
The space $\mathcal{C}_0^k(\R^d)$ with norm
$\|u\|_{\mathcal{C}^k}:=\max_{|\beta|\leq k} \|D^\beta u\|_\infty$ is a
Banach space.  Another suitable choice are the
$L^2$-Sobolev-spaces $H^k(\R^d)$ with the norm
$\|u\|_{H^k}^2=\sum_{|\alpha|\leq k} \|D^\alpha u\|_{L^2}^2$ which is
a Hilbert space for each $k\in\N$.  In $H^k(\R^d)$ we also use the
equivalent norm
\begin{equation}\label{eq:hkdotnorm}
  \|u\|_{\dot{H}^k}:=\|\langle \xi\rangle^k \wh{u}\|_{L^2},
\end{equation}
where $\langle\xi\rangle=\sqrt{1+|\xi|^2}$ and
\[\wh{u}(\xi)=(2\pi)^{-d/2} \int_{\R^d} e^{-ix^\top\xi} u(x)\,dx\]
is the Fourier transform of $u$ (cf.~\cite[Ch.7]{Rudin:1991}). 

\textbf{Acknowledgement}
We gratefully acknowledge financial support by the Deutsche
Forschungsgemeinschaft (DFG) through CRC 1173.


\section{Canonical Form and Symmetries of Generalized Burgers' Equation}
\label{sec:can_sym}
In this section we first derive a standard form of the generalized Burgers'
equation~\eqref{eq:burgersdd}.  Then we will calculate certain symmetries for
this standard form.
It is well-known and already appears in \cite{Hopf:1950} that Burgers'
equation~\eqref{eq:burgers1d} exhibits several symmetries.  The system has often
served as an example to illustrate similarity methods, e.g.\ see
\cite[Ex.~6.1]{Olver:1979} where a five-dimensional Lie algebra
corresponding to the symmetries in Burgers' equation is calculated and
also similarity solutions are obtained.  Here we do not follow the
abstract approach from \cite{Olver:1986} but rather directly supply
suitable symmetries, which we will later on use in our numerical method.  For
example, we ignore the shift equivariance with respect to time, which is of no
use to us, since we are interested in the behavior of solutions to the
corresponding Cauchy problems.

\subsection{Canonical Form}
To write equation~(\ref{eq:burgersdd}) in a standard form, we use the
following lemma.
\begin{lemma}
  \label{lem:canform}
  Let $M\in\R^{d,d}$ be invertible and identify $M$ with the linear
  mapping in $\R^d$ it induces.  Furthermore let $1\leq p<\infty$.
  \begin{enumerate}
  \item If $f:\R\to\R$ is locally Lipschitz continuous, it follows
    for $u\in
    W^{1,p}(\R^d,\R)\cap L^\infty(\R^d)$ with $f\circ u\in L^p$:
    $f\circ u\circ M\in W^{1,p}(\R^d)$ and
    \begin{equation}\label{eq:canfrom1}
      D\bigl(f\circ u\circ M\bigr)(x)\xi = Df\bigl(u(Mx)\bigr) Du(Mx)
      M\xi
    \end{equation}
    for almost every $x\in\R^d$ and all $\xi\in\R^d$.
  \item If $u\in W^{2,p}(\R^d)$, then $u\circ M\in W^{2,p}(\R^d)$ and
    \begin{equation}\label{eq:canform2}
      \Delta \bigl(u\circ M\bigr)(x) = \sum_{i,j,k}
      M_{ij}\partial_i\partial_k u(Mx) M_{kj} = \tr\bigl(M^\top
      \Hess(u) M\bigr)(Mx)
    \end{equation}
    for almost every $x\in \R^d$.
  \end{enumerate}
\end{lemma}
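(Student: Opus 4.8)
The plan is to reduce both statements to the classical change-of-variables and chain rules for Sobolev functions, using that the linear map $M$ is a bi-Lipschitz diffeomorphism of $\R^d$ onto itself. First I would recall the standard fact that if $M\in\R^{d,d}$ is invertible, then $u\mapsto u\circ M$ maps $W^{1,p}(\R^d)$ boundedly into itself, with the weak derivative obeying the linear chain rule $D(u\circ M)(x)=Du(Mx)M$ for a.e.\ $x$; this follows by approximating $u$ in $W^{1,p}$ by $\CC^\infty_c$ functions, applying the classical chain rule to the smooth approximants, and passing to the limit, the change of variables $y=Mx$ contributing only the constant Jacobian factor $|\det M|$. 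An entirely analogous argument with second differences (or with two applications of the first-order statement) gives $u\circ M\in W^{2,p}(\R^d)$ when $u\in W^{2,p}(\R^d)$, together with $\partial_i\partial_k$-type formulas for the second weak derivatives, from which \eqref{eq:canform2} follows by summing over the appropriate indices and recognizing $\sum_{i,j,k}M_{ij}\partial_i\partial_k u(Mx)M_{kj}$ as $\tr(M^\top\Hess(u)M)(Mx)$.

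For part (1) the extra ingredient is the composition with the scalar Lipschitz function $f$. Here I would use that $u\in W^{1,p}(\R^d)\cap L^\infty(\R^d)$ together with $f$ locally Lipschitz implies $f\circ u\in W^{1,p}_{\mathrm{loc}}$ with $D(f\circ u)=f'(u)Du$ a.e.\ (the chain rule of Stampacchia-type for Lipschitz $f$ composed with a Sobolev function; local Lipschitz is enough since $u$ is bounded, so only values of $f$ on the compact interval $[-\|u\|_\infty,\|u\|_\infty]$ matter, and there $f$ is genuinely Lipschitz). The hypothesis $f\circ u\in L^p$ is exactly what upgrades this to $f\circ u\in W^{1,p}(\R^d)$ globally. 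Combining this with the first-order change of variables from the previous paragraph — applied now to the $W^{1,p}$ function $f\circ u$ in place of $u$ — yields $f\circ u\circ M\in W^{1,p}(\R^d)$ and the identity \eqref{eq:canfrom1}, after identifying $Df(u(Mx))$ with the scalar $f'(u(Mx))$.

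The main obstacle is a bookkeeping one rather than a deep one: one must be careful that the chain rule for $f\circ u$ holds in the a.e.\ sense (the set where $f$ fails to be differentiable has measure zero in $\R$, but one needs that $u$ does not concentrate mass on that set, which is handled by the standard argument that $Du=0$ a.e.\ on level sets of $u$), and that the almost-everywhere identities are stable under the change of variables $x\mapsto Mx$, which is automatic because $M$ maps null sets to null sets. The cleanest exposition is probably to state both parts as consequences of two lemmas quoted from a standard reference — linear change of variables in $W^{k,p}$, and the Lipschitz chain rule — and then simply compose them; I would not reprove these classical facts in detail.
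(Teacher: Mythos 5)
Your proposal is correct and follows essentially the same route as the paper, which simply observes that the chain rule gives \eqref{eq:canfrom1}, \eqref{eq:canform2} for $u\in\mathcal{C}^2(\R^d)$ and refers the Sobolev case to the standard results in \cite[\S 2]{Ziemer:1989} (linear change of variables in $W^{k,p}$ and the Lipschitz chain rule), exactly the two classical facts you propose to quote and compose. Your additional remarks on the a.e.\ subtleties (local Lipschitzness sufficing by boundedness of $u$, $Du=0$ a.e.\ on preimages of null sets, and $M$ preserving null sets) are the correct way to fill in the details the cited reference handles.
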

The chain-rule immediately implies that \eqref{eq:canfrom1},
\eqref{eq:canform2} hold for $u\in \mathcal{C}^2(\R^d)$.  The
$L^p$-case considered in Lemma~\ref{lem:canform} follows from~\cite[\S
2]{Ziemer:1989}.  The above lemma shows that for $X=\CC_0^0(\R^d)$ and
$Y=\CC^2_0(\R^d)$, respectively $X=L^2(\R^d)$ and $Y=H^2(\R^d)$, a
function
\[u\in\mathcal{C}\bigl([0,T);Y\bigr)
\cap\mathcal{C}^1\bigl((0,T);X\bigr)\]
solves~(\ref{eq:burgersdd})
if and only if
\[v\in\mathcal{C}\bigl([0,|a| T);Y\bigr)
\cap\mathcal{C}^1\bigl((0,|a|T);X\bigr),\]
given by
\[u(x,t)=v\bigl(Q_a x,\tfrac{1}{|a|}t\bigr)=v(y,s),\quad
q=a-|a|\mathrm{e}_1,\quad
Q_a=I-2\frac{q\,q^\top}{q^\top q},\]
satisfies
\begin{equation}\label{eq:burgersdd2}
  \tfrac{\partial}{\partial s}v
  +\tfrac{1}{p}\tfrac{\partial}{\partial
    y_1}\left(|v|^p\right)=\tfrac{\nu}{|a|}\Delta  v. 
\end{equation}
Here we say that a function solves \eqref{eq:burgersdd} or \eqref{eq:burgersdd2} if
the equalities hold for all $t$ as equalities in $X$.
In the sequel we incorporate the
factor $\tfrac{1}{|a|}$ into the viscosity $\nu$, so that we restrict
to
\begin{equation}
  \label{eq:burgersdd'}
  u_t=\nu \Delta u +\frac{1}{p}\frac{\partial}{\partial
    x_1}\bigl(|u|^p\bigr)=: F(u).
\end{equation}
\begin{remark}\label{rem:wellposed}
  A simple well-posedness result can be found e.g. in
  \cite[\S8~Thm.~3.5]{Pazy:1983} in the sense that for every initial data
  $u_0\in H^2$ there exists a unique strong solution.  See also
  \cite[\S~7.2.5]{Pazy:1983}.
\end{remark}
From now on we do not discuss the well-posedness of the problems, but
require that the solution exists and belongs to the function space at hand
without further notice.

\subsection{Symmetries for Burgers' Equation}
To motivate the ansatz, we remark that the nonlinear operator
$F:u\mapsto \nu\Delta u-\tfrac{1}{p}\bigl(|u|^p\bigr)_{x_1}$ is
equivariant with respect to spatial translations in every direction
and with respect to rotations that leave the first coordinate axis
fixed.  Moreover, each of the summands has a scaling property and we
obtain a scaling symmetry by matching these.  More precisely, we make
the ansatz
\begin{equation}
  \label{eq:ansatz}
  u(x)=\frac{1}{\alpha} v\bigl(M^{-1}(x-b)\bigr),
\end{equation}
where $\alpha>0$, $b\in\R^d$, and $M\in\R^{d,d}$ is some invertible
matrix.  Using the chain rule, respectively Lemma~\ref{lem:canform}, easily
follows:
\begin{proposition}
  \label{prop:symmetries}
  Let $p>1$ be fixed. 
  For all $u\in \mathcal{C}^2(\R^d)$ (resp. $u\in H^2(\R^d)\cap
  L^\infty(\R^d)$)
  and $v$ given by~(\ref{eq:ansatz}), holds
  \begin{equation}\label{eq:2.2.1}
    F(u)= \const_{(\alpha,M,b)}\frac{1}{\alpha}
    F(v)\bigl(M^{-1}(\cdot-b)\bigr)
  \end{equation}
  as an equality in
  $\mathcal{C}^0$ (resp. $L^2$) if and only if
  $M=\alpha^{p-1}\begin{pmatrix}1&0\\0&Q
  \end{pmatrix}$, where $Q\in \mathcal{O}(d-1)$, and
  $\const_{(\alpha,M,b)}=\alpha^{2-2p}$.
\end{proposition}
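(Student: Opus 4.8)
The plan is to substitute the ansatz~\eqref{eq:ansatz} into the operator $F$ from~\eqref{eq:burgersdd'}, express the outcome in the rescaled and shifted variable $y:=M^{-1}(x-b)$ by means of Lemma~\ref{lem:canform}, and compare it term by term with $\const_{(\alpha,M,b)}\tfrac1\alpha F(v)\bigl(M^{-1}(\cdot-b)\bigr)$. Since $t\mapsto\tfrac1p|t|^p$ is locally Lipschitz continuous for $p>1$, Lemma~\ref{lem:canform} is applicable in the $H^2(\R^d)\cap L^\infty(\R^d)$ case, while in the $\mathcal{C}^2$ case the ordinary chain rule suffices; combining it with the translation invariance of $\Delta$ and of $\partial_{x_1}$ gives, for all $x$ (resp.\ a.e.\ $x$),
\begin{equation*}
  F(u)(x)=\frac{\nu}{\alpha}\,\tr\!\bigl(M^{-\top}\Hess(v)(y)\,M^{-1}\bigr)
  +\frac{1}{p\,\alpha^{p}}\,\bigl\langle M^{-1}\mathrm{e}_1,\nabla\bigl(|v|^p\bigr)(y)\bigr\rangle ,
\end{equation*}
whereas $\const_{(\alpha,M,b)}\tfrac1\alpha F(v)\bigl(M^{-1}(x-b)\bigr)=\const_{(\alpha,M,b)}\bigl(\tfrac{\nu}{\alpha}\tr(\Hess(v)(y))+\tfrac{1}{p\alpha}\langle\mathrm{e}_1,\nabla(|v|^p)(y)\rangle\bigr)$.

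Next I would exploit that~\eqref{eq:2.2.1} is required for \emph{all} admissible $u$ — equivalently for all admissible $v$, since $v\mapsto\tfrac1\alpha v\bigl(M^{-1}(\cdot-b)\bigr)$ is a bijection of $\mathcal{C}^2(\R^d)$, resp.\ of $H^2(\R^d)\cap L^\infty(\R^d)$, onto itself. Fixing $y_0$ and taking $v$ to be a quadratic polynomial times a cut-off that equals $1$ near $y_0$, the values $v(y_0)$, $\nabla v(y_0)$, $\Hess v(y_0)$ are at our disposal. Choosing $v(y_0)=0$ makes $\nabla(|v|^p)(y_0)=p\,|v(y_0)|^{p-1}\sgn\!\bigl(v(y_0)\bigr)\nabla v(y_0)=0$ (this uses $p>1$), so that only the second-order terms remain, and letting $\Hess v(y_0)$ run through all symmetric matrices yields, via $\tr(A^\top H A)=\sum_{j,k}(AA^\top)_{jk}H_{jk}$ with $A=M^{-1}$, the identity $(M^\top M)^{-1}=\const_{(\alpha,M,b)}I$. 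With this the Laplacian contributions cancel for every $v$, and then taking $v(y_0)=1$ with $\nabla v(y_0)$ arbitrary (so that $\nabla(|v|^p)(y_0)=p\,\nabla v(y_0)$) gives $M^{-1}\mathrm{e}_1=\const_{(\alpha,M,b)}\,\alpha^{p-1}\mathrm{e}_1$.

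It remains to solve these two relations. The first forces $\const_{(\alpha,M,b)}>0$ and $M=\const_{(\alpha,M,b)}^{-1/2}R$ with $R\in\mathcal{O}(d)$; inserting this into the second, the first row of $R$ equals $\const_{(\alpha,M,b)}^{1/2}\alpha^{p-1}\mathrm{e}_1^\top$, which must have Euclidean length $1$, so (as $\alpha>0$) $\const_{(\alpha,M,b)}=\alpha^{2-2p}$ and the first row of $R$ is $\mathrm{e}_1^\top$; orthogonality of $R$ then forces its first column to be $\mathrm{e}_1$ as well, whence $R=\begin{pmatrix}1&0\\0&Q\end{pmatrix}$ with $Q\in\mathcal{O}(d-1)$ and $M=\alpha^{p-1}\begin{pmatrix}1&0\\0&Q\end{pmatrix}$. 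Conversely, for $M$ and $\const_{(\alpha,M,b)}$ of this form a direct substitution into the two displays above verifies~\eqref{eq:2.2.1} for every admissible $u$. The computation is essentially bookkeeping; the step that requires genuine care is the passage from ``\eqref{eq:2.2.1} for all $v$'' to the two pointwise matrix identities, and in particular the use of $p>1$ to decouple the Laplacian term from the convection term — for $p=1$ this decoupling, and with it the conclusion, fails.
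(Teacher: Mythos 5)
Your proof is correct and follows essentially the same route as the paper: substitute the ansatz, apply the chain rule (Lemma~\ref{lem:canform} in the Sobolev case), and match the trace term and the $\mathrm{e}_1$-directional term to obtain $M^{-1}M^{-\top}=\const_{(\alpha,M,b)}I$ and $M^{-1}\mathrm{e}_1=\alpha^{p-1}\const_{(\alpha,M,b)}\mathrm{e}_1$, which are then solved exactly as you do. The only difference is that you spell out, via localized quadratic test functions and the $p>1$ decoupling, the ``for all $v$'' step that the paper states without elaboration; this is a harmless (indeed welcome) filling-in of detail, not a different argument.
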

\begin{proof}
  The chain rule (resp. Lemma~\ref{lem:canform}) implies for the left
  hand side of \eqref{eq:2.2.1}
  \[\begin{aligned}
    F(u)(x)=& \frac{1}{\alpha}\nu\tr\bigl(\Hess(v) M^{-1}
    M^{-\top}\bigr) \bigl(M^{-1}(x-b)\bigr)\\
    &- \frac{1}{\alpha^p}
    \bigl|v(M^{-1}(x-b))\bigr|^{p-1} \sgn\bigl(v(M^{-1}(x-b))\bigr)
    Dv\bigl(M^{-1}(x-b)\bigr)M^{-1}\e_1
  \end{aligned}
  \]
  for all $x\in\R^d$ (respectively for almost every $x\in\R^d$ in the
  $H^2$-case).  Similarly, the right hand side of \eqref{eq:2.2.1} equals
  for all $x\in\R^d$ (resp. for a.e.\ $x\in\R^d$)
  \[\const_{(\alpha,M,b)} \frac{1}{\alpha}\bigl(\nu\tr\bigl(\Hess
  (v)\bigr)+|v|^{p-1}\sgn(v) \frac{\partial}{\partial x_1}v)\bigl(
  M^{-1} (x-b)\bigr).\]
  Therefore, \eqref{eq:2.2.1} holds as a pointwise equality for all $x\in\R^d$
  (resp. for a.e.\ $x\in\R^d$) if and only
  if $M^{-1}M^{-\top}=\const_{(\alpha,M,b)} I$ and
  $M^{-1}\e_1=\alpha^{p-1}\const_{(\alpha,M,b)}\e_1$.  These last two
  equalities are equivalent to $\const_{(\alpha,M,b)}
  =\alpha^{2-2p}$ and $M=\alpha^{p-1}\begin{pmatrix} 1&0\\0&Q
  \end{pmatrix}$ with $Q\in\O(d-1)$.
  Finally note that for $u\in H^2(\R^d)\cap L^\infty(\R^d)$ both sides of
  \eqref{eq:2.2.1} belong to $L^2(\R^d)$ and since they are equal for a.e.\
  $x\in \R^d$, the equality is an equality in $L^2$.
\end{proof}
Therefore, we consider the group
$G:=\bigl(\R_+\times\SO(d-1)\bigr)\ltimes\R^d$ whose elements we
denote by $g=(\alpha,Q,b)$, with components $\alpha\in(0,\infty)$,
$Q\in\SO(d-1)$, $b\in\R^d$.  The multiplication of two elements $(\alpha_1,
Q_1,b_1),(\alpha_2,Q_2,b_2)\in G$ is given by
\[(\alpha_1,Q_1,b_1)\bullet(\alpha_2,Q_2,b_2):=
\Bigl(\alpha_1\alpha_2,Q_1Q_2,b_1+
\alpha_1\left(\begin{matrix}1&0\\0&Q_1\end{matrix}\right) b_2\Bigr).\]
The group $G$ is a
non-compact, path connected Lie-group with unity element
$\mathbbm{1}=(1,I_{d-1},0)$ and the inverse of an element $(\alpha,Q,b)\in G$
is given by
$(\alpha,Q,b)^{-1}=\left(\tfrac{1}{\alpha},Q^\top,-\tfrac{1}{\alpha}
\left(\begin{smallmatrix}1&0\\0&Q^\top
\end{smallmatrix}\right) b\right)$.
Moreover, its Lie-algebra (tangent space at $\mathbbm{1}$) is
\[\liealg{g}=\T_\mathbbm{1}G=\R\times\so(d-1)\times\R^d.\]
For later use we denote the left-multiplication by some element
$g_0=(\alpha_0,Q_0,b_0)\in G$, by
\[L_{g_0}:G\to G,\;g\mapsto g_0\bullet g,\]
and the derivative (tangential) of $L_{g_0}$ at a point
$g_1=(\alpha_1,Q_1,b_1)\in G$ is
\begin{equation}\label{eq:GdLmult}
  T_{g_1}L_{g_0}: \T_{g_1}G\to \T_{g_0\bullet g_1}G,\quad
  \mu\mapsto T_{g_1}L_{g_0}\mu =\left(\alpha_0
  \mu_1,Q_0\mu_2,\alpha_0
  \left(\begin{matrix}1&0\\0&Q_0\end{matrix}\right)
  \mu_3\right).
\end{equation}
From \eqref{eq:GdLmult} we easily obtain the exponential map for $G$, i.e. the
mapping $\exp:\mu\in\liealg{g}\to g(1)\in G$, where $g$ is the solution to the
differential equation $g'(t)=T_\mathbbm{1}L_{g(t)}\mu,\, g(0)=\mathbbm{1}$.
For $\mu=(\mu_1,\mu_2,\mu_3)=(a,S,v)\in\liealg{g}$ we have the formula
\begin{equation}\label{eq:lieexp}
  \exp(\mu)= \Bigl(\exp(a),\exp(S),
  \sum_{k=0}^\infty \frac{1}{(k+1)!}
  \vect{a&0\\0&aI_{d-1}+S}^k v\Bigr),
\end{equation}
where $\exp(a)$ is the standard exponential in $\R$ and $\exp(S)$ is the
standard matrix exponential.  Note that by associativity
$L_{g_0}^{-1}=L_{{g_0}^{-1}}$ and therefore we have the identity
\begin{equation}\label{eq:GdLmult_inv}
  \left(T_{g_0}L_{{g_0}^{-1}}\right)^{-1}=T_{\mathbbm{1}}L_{g_0}.
\end{equation}

\begin{remark}\label{rem:2.3.1}
  The Lie group $G$ can be written as a matrix Lie group.  More
  precisely,
  \begin{equation}\label{eq:2.3.1}
    \mathcal{M}:G\to \R^{d+1,d+1},
    \quad \mathcal{M}:g=(\alpha,Q,b)\mapsto \mathcal{M}_g=
    \begin{pmatrix}\alpha&0&b_1\\
      0&\alpha Q&b_{2:d}\\
      0&0&1
    \end{pmatrix},
  \end{equation}
  where as before $\alpha>0$, $Q\in\SO(d-1)$,
  $b=(b_1,b_2,\dots,b_d)^\top\in\R^d$ and we abbreviate
  $b_{2:d}=(b_2,\dots,b_d)^\top$.  For
  $g=(\alpha,Q,b),g'=(\alpha',Q',b')\in G$ holds
  \[
  \mathcal{M}_g\mathcal{M}_{g'}=
  \begin{pmatrix} \alpha\alpha'&0&\alpha b_1'+b_1\\
    0&\alpha\alpha'QQ'&\alpha Q b_{2:d}'+b_{2:d}\\
    0&0&1
  \end{pmatrix}
  =\mathcal{M}_{g\bullet g'},\]
  and obviously $\mathcal{M}_{\mathbbm{1}}=I_{d+1}\in\R^{d+1,d+1}$, so that 
  $\mathcal{M}$ is a group homomorphism from $(G,\bullet)$ to the matrix Lie
  group (closed subgroup of $\GL(d+1;\R)$)
  \[\mathcal{M}_G:=
    \left\{M=\begin{pmatrix}\alpha&0&b_1\\0&\alpha Q&
      b_{2:d}\\0&0&1
    \end{pmatrix}\in\R^{d+1,d+1} :
    \alpha\in(0,\infty),\begin{pmatrix}b_1\\b_{2:d}
    \end{pmatrix}
    \in\R^d, 
  Q\in \SO(d-1)\right\}.\]
  The corresponding matrix Lie algebra of $\mathcal{M}_G$ is
  \begin{equation}
    \label{eq:2.3.2}
    \liealg{m}_G=
    \T_{I_{d+1}}\mathcal{M}_G=\left\{
      m=\begin{pmatrix}a&0&v_1\\0&aI_{d-1}+S&v_{2:d}\\0&0&0
      \end{pmatrix}: a\in\R, \begin{pmatrix}v_1\\v_{2:d}
      \end{pmatrix}
      \in\R^{d},
      S\in\so(d-1)\right\}.
  \end{equation}
  In this matrix setting the derivative of the left multiplication in
  $\mathcal{M}_G$ by some element $\mathcal{M}_{g_0}$ with
  $g_0=(\alpha_0,Q_0,b_0)\in G$ of course is
  the left multiplication by a matrix:
  \[L_{\mathcal{M}_{g_0}}:\mathcal{M}_G\to \mathcal{M}_G,\quad
  \mathcal{M}_g\mapsto
  L_{\mathcal{M}_{g_0}}\mathcal{M}_g=\mathcal{M}_{g_0}\mathcal{M}_g
  =\mathcal{M}_{g_0\bullet g}.\]
  The tangential of $L_{\mathcal{M}_{g_0}}$ at the identity is easily calculated
  to be
  \[ T_IL_{\mathcal{M}_{g_0}}:
    \begin{aligned}
      \T_I\mathcal{M}_G&\to \T_{\mathcal{M}_{g_0}} \mathcal{M}_G,\\
      m=
      \begin{pmatrix}a&0&v_1\\0&a I+S&v_{2:d}\\0&0&0\end{pmatrix}
      &\mapsto \mathcal{M}_{g_0}m = 
      \begin{pmatrix}\alpha a&0&\alpha v_1\\
        0&\alpha Q a+ \alpha Q S&\alpha Q v_{2:d}\\0&0&0
      \end{pmatrix}.
  \end{aligned}\]
  Finally, in this case the exponential map in $\mathcal{M}_G$
  is simply given by the matrix exponential, and one finds
  for $m=\vect{a&0&v_1\\0&aI+S&v_{2:d}\\0&0&0}\in\liealg{m}_G$ the formula
  \begin{equation}\label{eq:2.3matrixexp}
    \exp(m) = \begin{pmatrix} 
      \exp(a)&0&\sum_{k=0}^\infty \tfrac{1}{(k+1)!}a^k v_1\\
      0&\exp(a)\exp(S)&\sum_{k=0}^\infty \tfrac{1}{k+1}
      (aI+S)^{k}v_{2:d}\\ 
      0&0&1
    \end{pmatrix}.
  \end{equation}
  Note that comparing the entries in \eqref{eq:2.3matrixexp} we again obtain
  formula \eqref{eq:lieexp} for the exponential map in $G$.
\end{remark}


\section{The Action and its Generators}
\label{sec:act_gen}
In the previous section we derived a continuous family of symmetries (Lie group)
which basically commutes with the nonlinear vector field $F$ from
\eqref{eq:burgersdd'}, see \eqref{eq:2.2.1}.  In this section we now consider the
analytic properties of how these symmetries act on functions.  We also calculate
the generators of these actions.
From now on let the parameter $p>1$ in the vector field $F(u)=\nu\laplace
u+\tfrac{1}{p}\tfrac{\partial}{\partial x_1}\bigl(|u|^p\bigr)$ be fixed.  We
denote by $a$ the action of the symmetry group $G$ on
functions $v:\R^d\to\R$,
\begin{equation} \label{eq:defa}
  a(\alpha,Q,b)v(x):=
  a\bigl((\alpha,Q,b),v\bigr)(x):=
  \alpha^{-1}
  v\left(\alpha^{1-p}\wt{Q}^\top(x-b)\right)
  \quad\forall
  x\in\R^d,
\end{equation}
where $\wt{Q}$ denotes the augmented matrix
$\wt{Q}=\left(\begin{smallmatrix}1&0\\0&Q\end{smallmatrix}\right)$ for
brevity.  Obviously, $a$ is a linear left action.  
Furthermore, we denote by $m$ the group homomorphism 
\begin{equation}\label{eq:defm}
  m:G\to (\R_+,\cdot),\;
  m(\alpha,Q,b)=\alpha^{2-2p}.
\end{equation}

First we show that the group action $a$ of $G$ 
is indeed a strongly continuous group action on $\mathcal{C}_0^k(\R^d)$ and
$H^k(\R^d)$.
\begin{lemma}\label{lem:2.3.1}
  Let $X=\mathcal{C}_0^k(\R^d)$ or $X=H^k(\R^d)$.
  Let $p>1$ be fixed and the action $a$ of $G$ on functions be given by
  \eqref{eq:defa}.
  Then $a:G\times X\to X, (g,v)\mapsto a(g)v$ is a strongly
  continuous group action, i.e.
  \begin{enumerate}[label=\textup{(\roman*)}]
  \item \label{itm:lem:2.3.1a} $a(g)\in \GL(X)\quad\forall g\in G$,
  \item \label{itm:lem:2.3.1b} $a(\mathbbm{1})=\operatorname{id}_{X}$,
  \item \label{itm:lem:2.3.1c} $a(g\bullet h)=a(g)\circ a(h)\quad \forall
    g,h\in G$,
  \item \label{itm:lem:2.3.1d} $\lim_{g\to\mathbbm{1}}a(g)v=v\quad\forall
    v\in X$.
  \end{enumerate}
\end{lemma}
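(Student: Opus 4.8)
The plan is to verify the algebraic properties \ref{itm:lem:2.3.1a}--\ref{itm:lem:2.3.1c} by direct computation and to reserve the actual work for the strong continuity \ref{itm:lem:2.3.1d}. Throughout, write $a(g)v=\alpha^{-1}\,(v\circ\phi_g)$ with $\phi_g(x)=\alpha^{1-p}\wt{Q}^\top(x-b)$, an affine diffeomorphism of $\R^d$ whose linear part is $\alpha^{1-p}$ times an orthogonal matrix; in particular $\phi_g$ is proper and has constant Jacobian determinant $\alpha^{(1-p)d}$. Since $\phi_g$ is affine, the chain rule expresses each $D^\beta(a(g)v)$ with $|\beta|\le k$ as a finite linear combination of the functions $(D^\gamma v)\circ\phi_g$, $|\gamma|=|\beta|$, whose coefficients are polynomials in $\alpha^{1-p}$ and the entries of $\wt{Q}^\top$. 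For $X=\CC_0^k(\R^d)$ this identity together with the properness of $\phi_g$ shows $a(g)v\in\CC_0^k(\R^d)$ and $\|a(g)v\|_{\CC^k}\le C(g)\|v\|_{\CC^k}$; for $X=H^k(\R^d)$ the same identity combined with the substitution $y=\phi_g(x)$ in the $L^2$-integrals gives $a(g)v\in H^k(\R^d)$ and $\|a(g)v\|_{H^k}\le C(g)\|v\|_{H^k}$. In both cases $C(g)$ depends continuously on $g$ (in fact only on $(\alpha,Q)$) with $C(\mathbbm{1})=1$, so every $a(g)$ is a bounded linear operator on $X$.

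Property \ref{itm:lem:2.3.1b} is immediate from $\mathbbm{1}=(1,I_{d-1},0)$. Property \ref{itm:lem:2.3.1c} follows by substituting the definitions of $a$ and of the product $\bullet$ and simplifying with $\wt{Q_1}\wt{Q_2}=\wt{Q_1Q_2}$ and $\wt{Q}^\top=\wt{Q}^{-1}$. Given \ref{itm:lem:2.3.1b} and \ref{itm:lem:2.3.1c}, the operator $a(g^{-1})$ --- bounded by the previous paragraph --- is a two-sided inverse of $a(g)$, which is \ref{itm:lem:2.3.1a}.

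For \ref{itm:lem:2.3.1d} I would argue by density. First take $w$ compactly supported and of class $\CC^k$, and $g_n\to\mathbbm{1}$ in $G$, i.e.\ $\alpha_n\to1$, $Q_n\to I_{d-1}$, $b_n\to0$. Then $\phi_{g_n}\to\operatorname{id}$ and $\phi_{g_n}^{-1}\to\operatorname{id}$ uniformly on compact sets, so for $n$ large the supports of $a(g_n)w$ all lie in one fixed compact set $K\supseteq\supp w$; on $K$ the coefficients in the chain-rule representation above converge to $\delta_{\beta\gamma}$, while $(D^\gamma w)\circ\phi_{g_n}\to D^\gamma w$ uniformly (uniform continuity of $D^\gamma w$ and $\sup_K|\phi_{g_n}(x)-x|\to0$). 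Hence $a(g_n)w\to w$ in $\CC^k$ and, since $K$ is bounded, also in $H^k$. Now $\CC_c^\infty(\R^d)$ is dense in $H^k(\R^d)$ and, by a standard smooth-cutoff argument, $\CC_c^k(\R^d)$ is dense in $\CC_0^k(\R^d)$, so an $\varepsilon/3$-estimate concludes: for $v\in X$ and $\varepsilon>0$ pick such a $w$ with $\|v-w\|_X<\varepsilon$ and use
\[\|a(g_n)v-v\|_X\le\|a(g_n)\|_{\mathcal L(X)}\|v-w\|_X+\|a(g_n)w-w\|_X+\|w-v\|_X;\]
since $\|a(g_n)\|_{\mathcal L(X)}\le C(g_n)\to1$ we have $\|a(g_n)\|_{\mathcal L(X)}\le2$ for $n$ large and $\|a(g_n)w-w\|_X\to0$, hence $\limsup_n\|a(g_n)v-v\|_X\le3\varepsilon$, and letting $\varepsilon\downarrow0$ gives \ref{itm:lem:2.3.1d}. (Applied with a general $g_0$ in place of $\mathbbm{1}$, via $a(g)v-a(g_0)v=a(g_0)\bigl(a(g_0^{-1}\bullet g)v-v\bigr)$ and continuity of the group operations, the same argument upgrades this to joint continuity of $a$, though only \ref{itm:lem:2.3.1d} is needed here.)

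I expect the density reduction in the last step to be the only real obstacle. One cannot run the convergence argument directly on all of $X$, because the maps $\phi_{g_n}$ tend to the identity only uniformly on compact sets --- the dilation and translation distortions grow at spatial infinity --- so some tightness is required to control the behaviour near infinity; compact support supplies it, and the only nonstandard point is to check that $\CC_c^k$ is dense in $\CC_0^k$ (a routine cutoff computation using that $D^\alpha v$ vanishes at infinity). Everything else is bookkeeping with the chain rule and the change-of-variables formula.
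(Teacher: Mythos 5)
Your proof is correct, and parts (i)--(iii) run along essentially the same lines as the paper (chain rule for the affine map, change of variables, group-homomorphism algebra). The real difference is in (iv). The paper proves strong continuity at $\mathbbm{1}$ for the $\mathcal{C}^k_0$-case \emph{directly} for arbitrary $v$, splitting the supremum into the regions $|x|\ge R$ and $|x|\le R$ and exploiting the decay of $v$ and its derivatives at infinity together with uniform continuity on compacts; for the $H^k$-case it reduces to Schwartz functions via the uniform operator bound \eqref{eq:2.3.9}, passes to the Fourier side using the explicit formula \eqref{eq:2.3.8} for $\bigl(a(g)v\bigr)^\wedge$, and concludes with the Vitali convergence theorem. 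You instead reduce \emph{both} cases to compactly supported $\mathcal{C}^k$ functions (density of $\CC^\infty_c$ in $H^k$, and a cutoff argument for $\CC^k_c$ in $\CC^k_0$), prove $a(g_n)w\to w$ by elementary uniform convergence on a fixed compact set containing all the supports, and finish with an $\varepsilon/3$ argument using local uniform boundedness of $\|a(g)\|_{\mathcal{L}(X)}$ near $\mathbbm{1}$ --- which is the same ingredient as the paper's \eqref{eq:2.3.7}, \eqref{eq:2.3.9}. Your route is more uniform across the two spaces and avoids the Fourier transform and Vitali's theorem entirely; its only extra burden is the density of $\CC^k_c$ in $\CC^k_0$, which you correctly flag and which is indeed a routine cutoff computation given that the paper's definition of $\CC^k_0$ requires all derivatives up to order $k$ to vanish at infinity. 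The paper's Fourier computation, on the other hand, is not wasted effort there, since \eqref{eq:2.3.8} also furnishes the $H^k$ operator bound \eqref{eq:2.3.9}; your change-of-variables bound serves the same purpose. Both arguments are complete and correct.
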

Using the Banach-Steinhaus Theorem, Lemma~\ref{lem:2.3.1} easily
implies that $a$ is continuous:
\begin{corollary}
  The mapping $a:G\times X\to X, (g,v)\mapsto a(g) v$ is
  continuous.
\end{corollary}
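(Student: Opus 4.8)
The plan is to deduce joint continuity of $a$ from the strong continuity established in Lemma~\ref{lem:2.3.1} by the classical uniform-boundedness argument. Since $G$ is a finite-dimensional Lie group (hence metrizable; alternatively embed $G$ into $\GL(d+1,\R)$ via $\mathcal{M}$) and $X$ is a Banach space, the product $G\times X$ is metrizable, so it suffices to prove sequential continuity: given $(g_n,v_n)\to(g,v)$ in $G\times X$, we must show $a(g_n)v_n\to a(g)v$ in $X$. The natural splitting is
\[
a(g_n)v_n-a(g)v = a(g_n)(v_n-v) + \bigl(a(g_n)v-a(g)v\bigr),
\]
and the two terms are handled separately.

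For the second term I would first upgrade \ref{itm:lem:2.3.1d} to continuity of the orbit map $g\mapsto a(g)w$ at \emph{every} point $g_0\in G$, for each fixed $w\in X$. Indeed, the cocycle property \ref{itm:lem:2.3.1c} gives $a(g)w-a(g_0)w = a(g_0)\bigl(a(g_0^{-1}\bullet g)w-w\bigr)$; as $g\to g_0$, continuity of multiplication and inversion in $G$ yields $g_0^{-1}\bullet g\to\mathbbm{1}$, so $a(g_0^{-1}\bullet g)w-w\to0$ by \ref{itm:lem:2.3.1d}, and applying the bounded operator $a(g_0)$ (bounded by \ref{itm:lem:2.3.1a}) finishes the claim. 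In particular $a(g_n)v-a(g)v\to0$.

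For the first term I need local uniform boundedness of the operator norms, which is exactly where Banach--Steinhaus enters. The set $K:=\{g_n:n\in\N\}\cup\{g\}$ is compact, being a convergent sequence together with its limit, and by the previous paragraph $g\mapsto a(g)w$ is continuous, so $\{a(g)w:g\in K\}$ is compact and hence bounded for every $w\in X$; that is, $\sup_{g\in K}\|a(g)w\|_X<\infty$. Since $X$ is a Banach space, the uniform boundedness principle gives $C:=\sup_{g\in K}\|a(g)\|_{\mathcal{L}(X)}<\infty$. Then $\|a(g_n)(v_n-v)\|_X\le C\,\|v_n-v\|_X\to0$, and combining the two estimates yields $a(g_n)v_n\to a(g)v$, proving the corollary.

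The only substantive point is obtaining the pointwise bounds $\sup_{g\in K}\|a(g)w\|_X<\infty$ that feed Banach--Steinhaus; once the orbit maps are known to be continuous on all of $G$ this is automatic from compactness of $K$, and the rest is routine manipulation with the group law. (One could instead argue directly with neighborhoods in $G$ and avoid metrizability, but the sequential version is cleaner.)
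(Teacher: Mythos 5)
Your argument is correct and is exactly the route the paper intends: the corollary is stated to follow from Lemma~\ref{lem:2.3.1} via Banach--Steinhaus, and your write-up fills in the standard details (orbit-map continuity from \ref{itm:lem:2.3.1c}, \ref{itm:lem:2.3.1d}, and \ref{itm:lem:2.3.1a}, then uniform boundedness on the compact set $\{g_n\}\cup\{g\}$). As a minor remark, the explicit estimates \eqref{eq:2.3.7} and \eqref{eq:2.3.9} in the proof of Lemma~\ref{lem:2.3.1} already give the local uniform bound directly, so Banach--Steinhaus could even be bypassed, but your use of it matches the paper.
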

\begin{proof}[Proof of Lemma~\ref{lem:2.3.1}.]
  Throughout the proof we always use $g=(\alpha,Q,b)\in G$.
  We begin with some preliminaries.  For $v\in \mathcal{C}^0_0(\R^d)$
  holds
  \begin{equation}
    \label{eq:2.3.3}
    \|a(g) v\|_\infty = \frac{1}{\alpha} \|v\|_\infty,
  \end{equation}
  and for $v\in \mathcal{S}(\R^d)$ we obtain by using the transformation
  formula the equality
  \begin{equation}
    \label{eq:2.3.4}
    \|a(g) v\|_{L^2}^2 = \int_{\R^d} \frac{1}{\alpha^2} \left|
      v\left(\frac{1}{\alpha^p} \wt{Q}^\top(x-b)\right)\right|^2\,dx
    = \alpha^{pd-2} \|v\|_{L^2}^2,
  \end{equation}
  which extends to all of $L^2(\R^d)$ by density of $\mathcal{S}(\R^d)$ in
  $L^2(\R^d)$.

  Now let $v$ be sufficiently smooth, e.g. $v\in\mathcal{S}(\R^d)$,
  which is dense in both $\mathcal{C}_0^k(\R^d)$ and $H^k(\R^d))$ 
  (e.g.~\cite[\S~2.6]{Rauch:1991}).
  For $i\in 1,\ldots,d$ then follow with the chain rule
  \begin{equation*}
    \frac{\partial}{\partial x_i} [ a(\alpha,Q,b) v](x) =
    \frac{1}{\alpha} Dv\left(\frac{1}{\alpha^p} \wt{Q}^\top
      (x-b)\right) \frac{1}{\alpha^p} \wt{Q}^\top e_i 
    = \frac{1}{\alpha^p} [a(g) v_i](x),
  \end{equation*}
  where we denote $v_i(x) = Dv(x) \wt{Q}^\top e_i$, so that inductively for
  every multi-index $\beta\in \N^d$ holds
  \begin{equation}\label{eq:2.3.5}
    D^\beta [a(\alpha,Q,b) v] = \frac{1}{\alpha^{p|\beta|}}
    \left[ a(g)\Bigl( D^{|\beta|}v(\cdot)[\wt{Q}^\top
      e^\beta]\Bigr)\right].
  \end{equation}
  Here we abbreviate $[\wt{Q}^\top e^\beta] = [\wt{Q}^\top
  e_1,\ldots,\wt{Q}^\top e_1,\ldots,\wt{Q}^\top e_d]$, 
  where each $\wt{Q}^\top e_i$ is repeated $\beta_i$-times,
  $|\beta|=\sum_{i=1}^d \beta_i$ as usual for multi-indices, and $D^{|\beta|}v$
  is the $\beta$'th total derivative of $v$.
  Because $\wt{Q}$ is an orthogonal matrix and $|e_i|=1$ for all $i=1,\dots,d$,
  the absolute value of the function
  $D^{|\beta|}v(\cdot)(\wt{Q}^\top e^\beta)$ is pointwise bounded by
  \begin{equation}\label{eq:2.3.6}
    \left|D^{|\beta|}v(x)(\wt{Q}^\top e^\beta)\right|
    \leq \const(|\beta|) \sum_{\nu\leq |\beta|} |D^\nu v(x)|,
  \end{equation}
  where $\const(|\beta|)$ only depends on $|\beta|$.
  Now we are ready to prove \ref{itm:lem:2.3.1a}.  Linearity and invertibility
  are obvious so that it remains to show boundedness in the respective spaces.
  First let $v\in \mathcal{C}^k_0(\R^d)$.  Then for $g=(\alpha,Q,b)\in G$ follows
  \begin{multline*}
    \|a(g)v\|_{\mathcal{C}^k} = \max_{|\beta|\leq k}
    \frac{1}{\alpha^{p|\beta|}} \|a(g)\bigl(D^{|\beta|}
    v(\cdot)[\wt{Q}^\top e^\beta]\bigr)\|_\infty\\
    \leq \left(1+\frac{1}{\alpha^{pk}}\right) 
    \max_{|\beta|\leq k} \sup_{x}\left| \frac{1}{\alpha} D^{|\beta|}
      v\bigl(\frac{1}{\alpha^p} \wt{Q}^\top(x-b)\bigr)[\wt{Q}^\top
      e^\beta]\right|\\
    \leq  \left(1+\frac{1}{\alpha^{pk}}\right) 
    \frac{1}{\alpha} \const(k) \|v\|_{\mathcal{C}^k},
  \end{multline*}
  where we used~\eqref{eq:2.3.5}, \eqref{eq:2.3.6}, and \eqref{eq:2.3.3}.  This implies
  \begin{equation}\label{eq:2.3.7}
    \|a(g)\|_{\GL(\mathcal{C}^k_0)}\leq \const(\alpha,k),
  \end{equation}
  where $\const(\alpha,k)$ can be chosen independently of $\alpha$ for
  all $\alpha$ from a fixed compact subset of $(0,\infty)$ and hence for all $g$
  from a fixed compact subset of $G$.

  Now consider the boundedness in $H^k(\R^d)$.  Let $v\in \mathcal{S}(\R^d)$
  and recall that the Fourier transform maps $\mathcal{S}(\R^d)$ into
  $\mathcal{S}(\R^d)$ (e.g.~\cite[Thm.~7.4]{Rudin:1991}).  We first
  observe
  \begin{equation}\label{eq:2.3.8}
    \bigl(a(g)v\bigr)^\wedge(\xi)=
    (2\pi)^{-\frac{d}{2}} \int_{\R^d} e^{-ix^\top \xi}
    \frac{1}{\alpha} v\left( \frac{1}{\alpha^p} \wt{Q}^\top
      (x-b)\right)\,dx =
    e^{-ib^\top \xi} \alpha^{dp-1} \wh{v}(\alpha^p \wt{Q}^\top \xi),
  \end{equation}
  so that we find for the $\|\cdot\|_{\dot{H}^k}$-norm (see
  \eqref{eq:hkdotnorm})
  \begin{multline*}
    \|a(g) v\|_{\dot{H}^k} = \alpha^{2(dp-1)} \int_{\R^d} 
    \langle\xi\rangle^{2k} \bigl| \wh{v}(\alpha^p \wt{Q}^\top
    \xi)\bigr|^2 \, d\xi
    =\alpha^{dp-2}\int_{\R^d} \langle \wt{Q}\alpha^{-p} z\rangle^{2k}
    |\wh{v}(z)|^2\,dz\\
    \leq \alpha^{dp-2}(1+\alpha^{-2pk}) \|v\|_{\dot{H}^k}^2
    =\const(\alpha,d,k) \|v\|_{\dot{H}^k}^2.
  \end{multline*}
  Again the constant $\const(\alpha,d,k)$ is uniformly bounded in
  compact subsets of $G$.  Because $\mathcal{S}(\R^d)$ is dense in
  $H^k(\R^d)$ and the norm $\|\cdot\|_{\dot{H}^k}$ is equivalent to the
  $\|\cdot\|_{H^k}$-norm, the same estimate holds for any $v\in H^k$ and 
  \begin{equation}\label{eq:2.3.9}
    \|a(g)\|_{\GL(H^k)}\leq \const(\alpha,k,d)
  \end{equation}
  follows.  This finishes the proof of
  \ref{itm:lem:2.3.1a}.
  
  By definition of the action $a$ also \ref{itm:lem:2.3.1b} and
  \ref{itm:lem:2.3.1c} hold.

  For the proof of~\ref{itm:lem:2.3.1d} we again first consider the
  $\mathcal{C}^k_0$-case:\\
  Let $v\in \mathcal{C}^0_0$.  For $g=(\alpha,Q,b)\in G$ we then
  obtain the estimate
  \begin{equation}\label{eq:ck0est1}
    \begin{aligned}
      \|a(g) v-v\|_\infty &= \sup_{x\in \R^d} \left| \alpha^{-1}
        v\bigl(\alpha^{-p} \wt{Q}^\top(x-b)\bigr) -
        v(x)\right|\\
      &\leq \alpha^{-1} \sup_{|x|\geq R} \left| v\bigl(
        \alpha^{-p} \wt{Q}^\top(x-b)\bigr)\right|
      + \sup_{|x|\geq R}
      |v(x)|\\
      &\quad + \alpha^{-1} \sup_{|x|\leq R} \left| v\bigl(
        \alpha^{-p} \wt{Q}^\top(x-b)\bigr)-v(x)\right|
       + \left|\alpha^{-1}-1\right| \sup_{|x|\leq R}
      |v(x)|. 
    \end{aligned}
  \end{equation}
  To show that $\|a(g)v-v\|_\infty\to 0$ as $g\to\mathbbm{1}$, let
  $\epsilon>0$ be given.  Since $\lim_{|x|\to\infty} |v(x)|=0$, there
  is $R>0$, so that the first two summands on the right hand side of
  \eqref{eq:ck0est1} are smaller than $\epsilon$ for all $|b|<1$ and
  $|1-\alpha|<\frac{1}{2}$.  Because $v$ is uniformly continuous on
  compact subsets and $\alpha^{-p}\wt{Q}^\top(x-b)\to x$ uniformly on
  $\{|x|\leq R\}$ as $\alpha\to 1, Q\to I, b\to 0$, the third summand
  converges to zero as $g\to \mathbbm{1}$.  Also the last summand converges to
  zero as $g\to\mathbbm{1}$, since $\alpha\to 1$ and $v$ is uniformly bounded.
  
  Now let $v\in \mathcal{C}^k_0$ and $\beta\in\N^d$ be some multi-index with $0<|\beta|\leq k$.
  Then we use \eqref{eq:2.3.5} to estimate for $g=(\alpha,Q,b)\in G$
  \begin{equation}\label{eq:ck0est2}
    \begin{aligned}
      \left\|D^\beta ([a(g)v]-v)\right\|_\infty
      &\leq \left| 1-\alpha^{-p|\beta|}\right|\,
      \|a(g)\|_{\GL(\mathcal{C}_0^0)}\,
      \|D^{|\beta|}v(\cdot)[\wt{Q}^\top e^\beta]\|_\infty\\
      &\quad  + \|a(g)\|_{\GL(\mathcal{C}^0_0)}\, \|D^{|\beta|}
      v(\cdot)[\wt{Q}^\top
      e^\beta]-D^{|\beta|}v(\cdot)[Ie^\beta]\|_\infty \\
      &\quad + \|a(g) D^{\beta}v(\cdot)-D^\beta v(\cdot)\|_\infty.
    \end{aligned}
  \end{equation}
  The first summand on the right hand side of \eqref{eq:ck0est2} converges to
  zero as $g\to\mathbbm{1}$, because $\|a(g)\|_{\GL(\mathcal{C}^0_0)}$ is uniformly bounded for
  $g$ from a compact subset of $G$ by \eqref{eq:2.3.7} and the factor
  $\|D^{|\beta|}v(\cdot)[\wt{Q}^\top e^\beta]\|_\infty$ is uniformly bounded
  by \eqref{eq:2.3.6}.
  The second summand converges to zero as $g\to\mathbbm{1}$ again because of
  \eqref{eq:2.3.7} and since $[\wt{Q}^\top e^\beta]\to [I\,e^\beta]$ in
  $(\R^d)^{|\beta|}$ (recall the abbreviation introduced above) 
  and the $|\beta|$-multilinear forms $D^{|\beta|}v(x)$ are bounded
  independently of $x$.
  Finally, also the third summand converges to zero as $g\to\mathbbm{1}$
  because of the $\mathcal{C}_0^0$-case, considered above.
  This proves \ref{itm:lem:2.3.1d} for the $\mathcal{C}^k_0(\R^d)$-case.

  For the $H^k$-case it suffices to consider $v\in \mathcal{S}(\R^d)$
  because of estimate~\eqref{eq:2.3.9} and the density of
  $\mathcal{S}(\R^d)$ in $H^k(\R^d)$.  Therefore, let $v\in
  \mathcal{S}(\R^d)$ and let $(g_n)_{n\in\N}\subset G$ be a sequence with
  $g_n\to\mathbbm{1}$ as $n\to\infty$.
  Using~\eqref{eq:2.3.8} we find
  \[
  \|a(g_n)v-v\|_{\dot{H}^k}^2 =\int_{\R^d} \langle\xi\rangle^{2k}
  \Bigl| \alpha_n^{dp-1} \wh{v}(\alpha_n^p\wt{Q}_n^\top\xi)
  e^{-ib_n^\top \xi} - \wh{v}(\xi)\Bigr|^2\,d\xi.\]
  The integrand converges pointwise to zero as $n\to\infty$ and
  is bounded by
  \begin{equation}\label{eq:hkest1}
    2\langle\xi\rangle^{2k}  \alpha_n^{2dp-2}
    \wh{v}(\alpha_n^p\wt{Q}_n^\top\xi)^2 + 
    2\langle\xi\rangle^{2k}  \wh{v}(\xi)^2.
  \end{equation}
  For $n_0$ with $|\alpha_n-1|\le \tfrac{1}{2}$ for all $n\ge n_0$, both
  summands of \eqref{eq:hkest1} are uniformly integrable independently of $n\ge
  n_0$.  Finally, for any given $\epsilon>0$ there exists $R>0$,
  such that for all $n\geq n_0$ holds
  \[
  \int_{|x|>R} 2 \langle\xi\rangle^{2k} \Bigl( \alpha_n^{2dr-2}
  \wh{v}(\alpha_n^p\wt{Q}_n^\top\xi)^2
  + \wh{v}(\xi)^2\Bigr)\,d\xi\leq \epsilon.\]
  Thus, the Vitali convergence theorem
  (e.g.~\cite[VI.5.6]{Elstrodt:2009}) applies and yields
  \[\lim_{n\to\infty}\|a(g_n)v-v\|_{\dot{H}^k}=0.\]
\end{proof} 

With the above definitions of $G$, $a$ and $m$, and because of
Lemma~\ref{lem:2.3.1}, we can rephrase
Proposition~\ref{prop:symmetries} as follows:
\begin{proposition}\label{prop:symburg}
  For all $v\in \mathcal{C}^2(\R^d)$ (resp. $v\in H^2(\R^d)\cap
  L^\infty(\R^d)$)
  hold for all $g\in G$ 
  \begin{equation}\label{eq:rel}
    F\bigl(a(g)v\bigr)=m(g) a(g)F(v),
  \end{equation}
  as identities in $\mathcal{C}^0$ (resp. in $L^2$).
\end{proposition}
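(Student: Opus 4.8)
The plan is to recognize that Proposition~\ref{prop:symburg} is merely a restatement of Proposition~\ref{prop:symmetries} in the language of the action $a$ and the character $m$; the entire argument consists of matching parameters and then checking that all objects stay in the appropriate function spaces.

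First I would fix $g=(\alpha,Q,b)\in G$ and set $M:=\alpha^{p-1}\wt{Q}\in\R^{d,d}$, with $\wt{Q}=\left(\begin{smallmatrix}1&0\\0&Q\end{smallmatrix}\right)$ as in~\eqref{eq:defa}. Since $\alpha>0$ and $\wt{Q}$ is orthogonal, $M$ is invertible with $M^{-1}=\alpha^{1-p}\wt{Q}^\top$, and $M=\alpha^{p-1}\left(\begin{smallmatrix}1&0\\0&Q\end{smallmatrix}\right)$ has exactly the block form appearing in Proposition~\ref{prop:symmetries}, with $Q\in\SO(d-1)\subset\O(d-1)$. Comparing~\eqref{eq:defa} with the ansatz~\eqref{eq:ansatz}, the function $a(g)v$ is precisely the function $u$ associated to $v$ by that ansatz for the triple $(\alpha,M,b)$, i.e.\ $a(g)v=\tfrac{1}{\alpha}v\bigl(M^{-1}(\cdot-b)\bigr)$; applying~\eqref{eq:defa} with $F(v)$ in place of $v$ likewise gives $a(g)F(v)=\tfrac{1}{\alpha}F(v)\bigl(M^{-1}(\cdot-b)\bigr)$. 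Finally $m(g)=\alpha^{2-2p}=\const_{(\alpha,M,b)}$ by~\eqref{eq:defm} and Proposition~\ref{prop:symmetries}.

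Since $M$ satisfies the compatibility condition, \eqref{eq:2.2.1} applies and, rewritten with the identifications above, reads
\[
  F\bigl(a(g)v\bigr)=\const_{(\alpha,M,b)}\tfrac{1}{\alpha}F(v)\bigl(M^{-1}(\cdot-b)\bigr)=m(g)\,a(g)F(v),
\]
which is exactly~\eqref{eq:rel}.

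It remains to make this precise in the respective spaces. For $v\in\mathcal{C}^2(\R^d)$ the defining formula shows $a(g)v\in\mathcal{C}^2(\R^d)$, so both sides of~\eqref{eq:rel} are continuous and the identity holds in $\mathcal{C}^0$. For $v\in H^2(\R^d)\cap L^\infty(\R^d)$, Lemma~\ref{lem:2.3.1} gives $a(g)v\in H^2(\R^d)$, while $\|a(g)v\|_\infty=\alpha^{-1}\|v\|_\infty$ (the computation behind~\eqref{eq:2.3.3} uses only boundedness of $v$) gives $a(g)v\in L^\infty(\R^d)$, so Proposition~\ref{prop:symmetries} applies; both sides of~\eqref{eq:rel} then lie in $L^2(\R^d)$ and, being equal almost everywhere, are equal in $L^2$. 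There is no genuine obstacle in this proof — the only point worth spelling out is precisely this verification that the action $a(g)$ preserves the spaces on which $F$ acts, and that is already contained in Lemma~\ref{lem:2.3.1}.
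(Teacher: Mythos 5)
Your argument is exactly the paper's: Proposition~\ref{prop:symburg} is presented there as a rephrasing of Proposition~\ref{prop:symmetries} (via the identifications $M=\alpha^{p-1}\wt{Q}$, $m(g)=\const_{(\alpha,M,b)}=\alpha^{2-2p}$), with Lemma~\ref{lem:2.3.1} supplying the space-preservation facts you spell out. Your proposal is correct and matches that route, merely making the bookkeeping explicit.
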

As before, let $X=\mathcal{C}^k_0(\R^d)$ or $X=H^k(\R^d)$.
For fixed $v\in X$ we denote by $T_ga\,v:\T_gG\to \T_{a(g)v}X$ the tangential of the
mapping $G\ni g\mapsto a(g)v\in X$ at $g\in G$, if it exists.  The
evaluation of $T_ga\,v$ at $\mu\in \T_gG$ is denoted by
$T_ga\,v[\mu]$.  
To motivate that the set of all $v\in X$ for which the tangential exists is
actually a useful set, note that by Lemma~\ref{lem:2.3.1} and the properties of
the exponential map for each $\mu\in\liealg{g}$ the mapping $S_\mu:t\mapsto
a\bigl(\exp(\mu t)\bigr)$ defines a strongly continuous semigroup on $X$.  Hence
the generator is a densely defined and closed operator on $X$.  The generator
coincides with the directional derivative of the group action at $\mathbbm{1}$
in the direction $\mu$.
This motivates to consider for a basis
$\epsilon_1,\ldots,\epsilon_{\dim(\liealg{g})}$ of
$\liealg{g}$ the subset $Y_0$ of $X$ defined as
\begin{equation}\label{eq:2.3.10}
  Y_0:=\left\{ v\in X:\exists T_{\mathbbm{1}}a\,v[\epsilon_j] 
    = 
    \lim_{t\to 0} \frac{1}{t} \bigl(a(\exp(\epsilon_j t))v-v\bigr),\;
    j=1,\ldots,\dim(\liealg{g})\right\}.
\end{equation}
Using Lemma~\ref{lem:2.3.1} and the results of
\cite[\S~4]{SandstedeScheelWulff:1999}, we obtain that $Y_0$ indeed is a
dense subset of $X$, and $Y_0$ becomes a Banach-space, when endowed
with the norm
\[\|v\|_{Y_0}=\|v\|_{X}+\sup_{j=1,\ldots,\dim\liealg{g}} \|T_{\mathbbm{1}}
a\,v[\epsilon_j]\|_X.\]
Moreover, $G$ acts continuously on $\bigl(Y_0,\|\cdot\|_{Y_0}\bigr)$ and for fixed $v\in Y_0$ the
mapping 
\[G\ni g\mapsto a(g)v\in X\]
is continuously differentiable. 
We summarize the above discussion in the following lemma.
\begin{lemma}\label{lem:op_prop_action}
  Let $X=\mathcal{C}_0^k(\R^d)$ or $X=H^k(\R^d)$ and let $Y_0$ be
  defined by~\eqref{eq:2.3.10}.  Then $Y_0\subset X$ is a dense
  subset and the action $a$, defined
  in~\eqref{eq:defa} has the following properties:
  \begin{itemize}
  \item[(a)] For fixed $g\in G$, the operator $a(g)$ is a
    bounded linear operator on $X$ and on $Y_0$.
  \item[(b)] For fixed $v\in X$, resp.\ $v\in Y_0$, the mapping $G\ni
    g\mapsto a(g) v$ is continuous into $X$, resp.\ $Y_0$.
  \item[(c)] For fixed $v\in Y_0$, the mapping $G\ni g\mapsto a(g)v\in
    X$, is continuously differentiable.
  \end{itemize}
\end{lemma}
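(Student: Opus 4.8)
The plan is to derive all three assertions from the strong continuity established in Lemma~\ref{lem:2.3.1} together with the compatibility of the action with the group law, essentially by specializing the abstract framework of \cite[\S~4]{SandstedeScheelWulff:1999}. By Lemma~\ref{lem:2.3.1} each one-parameter subgroup $t\mapsto a(\exp(\epsilon_j t))$ is a strongly continuous group on $X$, hence has a densely defined, closed generator $A_j$ with $\operatorname{dom}(A_j)=\{v\in X:T_{\mathbbm{1}}a\,v[\epsilon_j]\text{ exists}\}$, so that $Y_0=\bigcap_j\operatorname{dom}(A_j)$. Density of $Y_0$ I would establish by the usual mollification: for $v\in X$ and $\varphi\in\mathcal{C}_c^\infty(G)$ the Bochner integral $v_\varphi:=\int_G\varphi(g)\,a(g)v\,dg$ (well defined since $g\mapsto a(g)v$ is continuous and $\|a(g)\|_{\GL(X)}$ is locally bounded) lies in $Y_0$, because the substitution $g\mapsto\exp(\epsilon_j t)\bullet g$ combined with $a(\exp(\epsilon_j t)\bullet g)=a(\exp(\epsilon_j t))\,a(g)$ transfers the $t$-derivative onto $\varphi$; choosing a Dirac-like sequence $\varphi$ concentrated at $\mathbbm{1}$ then gives $v_\varphi\to v$ in $X$ by Lemma~\ref{lem:2.3.1}\,\ref{itm:lem:2.3.1d}.

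For part (a), boundedness of $a(g)$ on $X$ is exactly estimates~\eqref{eq:2.3.7} and~\eqref{eq:2.3.9}. To see that $a(g)$ maps $Y_0$ boundedly into $Y_0$, I would use the conjugation identity coming from $\exp(\epsilon_j t)\bullet g=g\bullet\bigl(g^{-1}\bullet\exp(\epsilon_j t)\bullet g\bigr)$ and $\tfrac{d}{dt}\big|_{t=0}\bigl(g^{-1}\bullet\exp(\epsilon_j t)\bullet g\bigr)=\operatorname{Ad}_{g^{-1}}\epsilon_j$, namely
\[
T_{\mathbbm{1}}a\,\bigl(a(g)v\bigr)[\epsilon_j]=a(g)\,T_{\mathbbm{1}}a\,v\bigl[\operatorname{Ad}_{g^{-1}}\epsilon_j\bigr],
\]
which exists whenever $v\in Y_0$ (the directional derivative being linear on $Y_0$, the right-hand side is a finite linear combination of the $T_{\mathbbm{1}}a\,v[\epsilon_i]$), and which yields $\|a(g)v\|_{Y_0}\le C(g)\|v\|_{Y_0}$ with $C(g)$ controlled by $\|a(g)\|_{\GL(X)}$ and $|\operatorname{Ad}_{g^{-1}}|$.

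For part (b), for fixed $v\in X$ I would write $a(g)v-a(g_0)v=a(g_0)\bigl(a(g_0^{-1}\bullet g)v-v\bigr)$; since $g_0^{-1}\bullet g\to\mathbbm{1}$ as $g\to g_0$, Lemma~\ref{lem:2.3.1}\,\ref{itm:lem:2.3.1d} gives continuity into $X$. For $v\in Y_0$ one must in addition control, for each $j$, the term $T_{\mathbbm{1}}a\,(a(g)v)[\epsilon_j]=a(g)\,T_{\mathbbm{1}}a\,v[\operatorname{Ad}_{g^{-1}}\epsilon_j]$ in $X$; expanding $\operatorname{Ad}_{g^{-1}}\epsilon_j$ in the basis with coefficients depending continuously on $g$ and using the $X$-continuity just proved for the fixed vectors $T_{\mathbbm{1}}a\,v[\epsilon_i]\in X$, together with local boundedness of $\|a(g)\|_{\GL(X)}$, yields continuity into $Y_0$. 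For part (c), the framework of \cite[\S~4]{SandstedeScheelWulff:1999} shows that for $v\in Y_0$ the map $T_{\mathbbm{1}}a\,v$ exists as a bounded linear map $\liealg{g}\to X$ and that $g\mapsto a(g)v$ is differentiable at every $g$ with, in the left trivialization $\T_gG\ni T_{\mathbbm{1}}L_g\mu\leftrightarrow\mu$,
\[
T_g a\,v\bigl[T_{\mathbbm{1}}L_g\mu\bigr]=\tfrac{d}{dt}\big|_{t=0}a\bigl(g\bullet\exp(\mu t)\bigr)v=a(g)\,T_{\mathbbm{1}}a\,v[\mu];
\]
continuity of $g\mapsto T_g a\,v$ then follows since $\sup_{|\mu|\le1}\|(a(g)-a(g_0))T_{\mathbbm{1}}a\,v[\mu]\|_X\le C\sum_i\|(a(g)-a(g_0))T_{\mathbbm{1}}a\,v[\epsilon_i]\|_X\to0$ by part (b).

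I expect the genuine content to be exactly the abstract regularity statements invoked from \cite[\S~4]{SandstedeScheelWulff:1999}: that $Y_0$, although defined through one fixed basis, is the full space of $\mathcal{C}^1$-vectors, that it is dense, that directional derivatives are linear on it, and that the chain rule $T_g a\,v[T_{\mathbbm{1}}L_g\mu]=a(g)\,T_{\mathbbm{1}}a\,v[\mu]$ holds. Everything else is bookkeeping with the group multiplication, the adjoint representation, and the locally uniform operator bounds~\eqref{eq:2.3.7},~\eqref{eq:2.3.9}; the main obstacle is simply to check carefully that the hypotheses of that abstract framework are met by the concrete action~\eqref{eq:defa}, which is precisely what Lemma~\ref{lem:2.3.1} provides.
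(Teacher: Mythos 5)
Your proposal is correct and takes essentially the same route as the paper: the paper deduces Lemma~\ref{lem:op_prop_action} directly from the strong continuity established in Lemma~\ref{lem:2.3.1} combined with the abstract results of \cite[\S~4]{SandstedeScheelWulff:1999}, giving no further details. The additional steps you supply (Haar-mollification for the density of $Y_0$, the $\operatorname{Ad}$-conjugation identity for invariance of $Y_0$, and the left-trivialized chain rule for part (c)) are exactly the standard arguments behind that citation and are carried out correctly.
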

\begin{example}\label{ex:gen}
  For later use, we now explicitly calculate for $d=1$, $d=2$, and $d=3$ spatial
  dimensions for specific choices of
  bases $\epsilon_1,\dots,\epsilon_{\dim(\liealg{g})}$ of $\liealg{g}$ the
  generators $v\mapsto T_\mathbbm{1} a\,v[\epsilon_j]$ of the semigroups
  $\bigl(S_{\epsilon_j}(t)\bigr)_{t\ge 0}$ and the space $Y_0$.  As before, we
  may restrict the calculation to $v\in \mathcal{S}(\R^d)$, which is dense in
  both, $\mathcal{C}_0^k(\R^d)$ and $H^k(\R^d)$.
  \begin{enumerate}[label=\textup{(\roman*)}]
    \item \label{itm:ex:gen1d} For $d=1$ the symmetry group is
      $G=\bigl(\R_+\times\emptyset\bigr)\ltimes \R\hat{=} \R_+\ltimes \R$.
      As a basis $\{\epsilon_1,\epsilon_2\}$ of
      $\liealg{g}=T_\mathbbm{1}G=\R\times\R$ we choose
      \begin{equation*}
        \epsilon_1=(1,0), \epsilon_2=(0,1).
      \end{equation*}
      From formula \eqref{eq:lieexp} for the exponential map we obtain
      $\exp(\epsilon_1 t) = (e^t,0)$ and $\exp(\epsilon_2 t) = (0,t)$, so that
      together with \eqref{eq:defa} follow
        \begin{align*}
          T_\mathbbm{1}a\,v[\epsilon_1]&= \lim_{t\searrow 0} \frac{e^{-t}
          v(e^{(1-p)t} \cdot)-v(\cdot)}{t}= -v+(1-p)xv_x,\\
          T_\mathbbm{1}a\,v[\epsilon_2]&= \lim_{t\searrow 0}
          \frac{v(\cdot-t)-v(\cdot)}{t} = -v_x.
        \end{align*}
      Therefore, $Y_0=\{v\in X:v_x\in X, xv_x\in X\}$.
    \item \label{itm:ex:gen2d} For $d=2$ we obtain
      $G=\bigl(\R_+\times\{1\}\bigr)\ltimes\R^2$.  As basis of
      $\liealg{g}=\T_\mathbbm{1}G=\R\times\{0\}\times\R^2$
      we choose
      \begin{equation*}
        \epsilon_1=(1,0,0), \epsilon_2=(0,0,e_1), \epsilon_3=(0,0,e_2).
      \end{equation*}
      Similar considerations as in the 1-dimensional case lead to
        \begin{align*}
          T_\mathbbm{1}a\,v[\epsilon_1]&=-v+(1-p)x^\top\gradient v,\\
          T_\mathbbm{1}a\,v[\epsilon_2]&=-v_{x},\\
          T_\mathbbm{1}a\,v[\epsilon_3]&=-v_{y},
        \end{align*}
      and $Y_0=\{v\in X: v_x, v_y, xv_x+yv_y \in X\}$.
    \item \label{itm:ex:gen3d} For $d=3$ we have $G=\bigl(\R_+\times
      \SO(2)\bigr)\ltimes\R^3$ and choose the basis 
      \begin{equation*}
        \epsilon_1=(1,0,0),
        \epsilon_2=\Bigl(0,\left(\begin{matrix}0&-1\\
          1&0\end{matrix}\right),0\Bigr),
        \epsilon_j=(0,0,e_{j-2}), j=3,4,5
      \end{equation*}
      of
      $\liealg{g}=T_\mathbbm{1}G=\R\times\linearhull\{\left(\begin{smallmatrix}0&-1\\
        1&0\end{smallmatrix}\right)\}\times\R^3$.  
      Analogous calculations as in the $d=1$ and $d=2$ case now yield the
      generators
        \begin{align*}
          T_\mathbbm{1}a\,v[\epsilon_1]&=-v+(1-p)x^\top\gradient v,\\
          T_\mathbbm{1}a\,v[\epsilon_2]&=x^\top
          \left(\begin{smallmatrix}0&0&0\\0&0&-1\\0&1&0
          \end{smallmatrix}\right) \gradient v,\\
          T_\mathbbm{1}a\,v[\epsilon_j]&=-v_{x_{j-2}},\;j=3,4,5
        \end{align*}
      and $Y_0=\{v\in X:v_x,v_y,v_z,xv_x+yv_y+zv_z, zv_y-yv_z\in X\}$, where
      we interchangeably write $v_x=v_{x_1}$, $v_y=v_{x_2}$, and $v_z=v_{x_3}$.
  \end{enumerate}
\end{example}

In the following we will  often use the spaces $\CC_\scal^2(\R^d)$ and
$H^2_\scal(\R^d)$, which are defined as
\begin{equation}\label{eq:defCCscal}
  \mathcal{C}^2_\scal(\R^d):=\Bigl\{v\in \mathcal{C}^2_0(\R^d): 
  T_\mathbbm{1}a\,v[\mu]\in \CC^0_0(\R^d)\;\forall
  \mu\in\liealg{g}\Bigr\}
\end{equation}
and
\begin{equation}\label{eq:defHscal}
  H^2_\scal(\R^d):=\Bigl\{v\in H^2(\R^d): 
  T_\mathbbm{1}a\,v[\mu]\in L^2(\R^d)\;\forall
  \mu\in\liealg{g}\Bigr\}.
\end{equation}
Endowed with a suitable norm, these become Banach-spaces
(respectively Hilbert-spaces).  
For concreteness we choose in the case of one spatial dimension the norms
$\|v\|_{\CC^2_\scal}=\|v\|_{\CC^2}+\|xv_x\|_{\infty}$, respectively
$\|v\|_{H^2_\scal}^2=\|v\|_{H^2}^2+\|xv_x\|_{L^2}^2$.  In the $d=2$ case 
the
norms $\|v\|_{\CC^2_\scal}=\|v\|_{\CC^2}+\|x^\top\gradient
v\|_\infty$, respectively
$\|v\|_{H^2_\scal}^2=\|v\|_{H^2}^2+\|x^\top\gradient v\|_{L^2}^2$, are
suitable.  And in the $d=3$ case, we choose
$\|v\|_{\CC^2_\scal}=\|v\|_{\CC^2}+\|x^\top\gradient
v\|_\infty+\|-yv_z+zv_y\|_\infty$, respectively
$\|v\|_{H^2_\scal}^2=\|v\|_{H^2}^2+\|x^\top\gradient v\|_{L^2}^2+
\|-yv_z+zv_y\|_{L^2}^2$.

\begin{remark}\label{rem:2.5}
  It is easy to see that the generator of the scaling action, given by
  \[T_\mathbbm{1}a\,v[(1,0,0)]=-v+(1-p)x^\top \gradient
  v=-v+(1-p)\divergence (x v)+d(p-1)v\] 
  in any space dimension, is in divergence form for all
  $v\in \mathcal{S}(\R^d)$ if and only if
  \[1=d(p-1)\Leftrightarrow p=\frac{d+1}{d}.\] 
  In contrast to this, the generators of the translation and rotation actions
  are always in divergence form.
  This is precisely the case for the action of the symmetry belonging to, what
  we called, the ``conservative Burgers' equation'' in the
  Section~\ref{sec:intro}.   This observation is precisely the reason, why we
  called the case $p=\frac{d+1}{d}$ in \eqref{eq:burgersdd} the conservative
  Burgers' equation in the first place.
  Note that the ``conservative Burgers' equation''
  coincides with the ``multi-dimensional Burgers'
  equation'' only in the case $d=1$.  
\end{remark}


\section{Freezing Similarity Solutions}
\label{sec:freeze}
In this section we derive the equations for the numerical method of freezing
similarity solutions in generalized Burgers' equation.  In the first part we
derive the equation \eqref{eq:burgersdd} in new, time-dependent, coordinates,
based on the symmetries obtained in Sections~\ref{sec:can_sym}
and~\ref{sec:act_gen}.  In the second part we augment this new system with algebraic
constraints to deal with the arbitrariness introduced by the new coordinates.
In the following we denote $X=\CC^0_0(\R^d)$ and then $Y_1=\CC^2_\scal(\R^d)$ or
$X=L^2(\R^d)$ and then $Y_1=H^2_\scal(\R^d)$,
respectively.

\subsection{Co-moving Frames and Similarity Solutions}
The basic idea of the method is to split the time evolution of the
solution of the Cauchy problem for \eqref{eq:burgersdd'}
into a part that treats the evolution which
mainly takes place in the group orbit of the profile and a part that
takes the evolution of the profile in the remaining directions into
account.  Formally, this is done by making the ansatz
\begin{equation}\label{eq:2.4.aa}
  u(t)=a\bigl(g(\sigma(t))\bigr) v\bigl(\sigma(t)\bigr),
\end{equation}
where $a$ is the left action of the symmetry group $G$, defined in
\eqref{eq:defa}, $g$ is a smooth curve in the group $G$,
$\sigma$ is an orientation preserving diffeomorphism of two intervals,
describing a scaling of time, and $v$ is a time dependent profile, taking values
in $Y_1$.

A solution of the Cauchy-problem, whose evolution solely takes
place in the group orbit, we call a similarity solution.  This is made precise
in the next definition.
\begin{definition}[Similarity solution]\label{def:simsol}
  We call $u$ a similarity solution of $u_t=F(u)$ with profile
  $\sol{v}\in Y_1$, if there exists an
  open interval $J\subset \R$, a differentiable map
  $\sigma:J\to\R$ with $\dot{\sigma}(t)>0$
  for all $t\in J$, and a differentiable curve $g\in
  \mathcal{C}^1(\sigma(J),G)$,
  such that $u(t)=a\bigl(g(\sigma(t))\bigr) \sol{v}$ is a
  solution of $u_t=F(u)$.
\end{definition}
The key for the freezing method is now the following
Theorem~\ref{thm:keyfreeze}, which relates the differential equations
satisfied by the two functions $u$ and $v$ from the ansatz \eqref{eq:2.4.aa}.
\begin{theorem}\label{thm:keyfreeze}
  Let $u_0\in Y_1$ and $\mu\in \CC([0,\wh{T});\liealg{g})$, $\wh{T}>0$
  be given.  Then there exist unique maximally extended solutions
  $g\in \CC^1([0,\wh{T});G)$ of
  \begin{subequations}\label{eq:recbasic}
    \begin{equation}\label{eq:thm.2.1}
      g'(\tau)=T_\mathbbm{1}L_{g(\tau)}\mu(\tau),\quad g(0)=\mathbbm{1},
    \end{equation}
    and $\sigma\in \CC^1([0,T);[0,\wh{T}))$ of
    \begin{equation}\label{eq:thm.2.2}
      \dot{\sigma}(t)=m\bigl(g(\sigma(t))\bigr),\quad \sigma(0)=0.
    \end{equation}
  \end{subequations}
  The function $\sigma:[0,T)\to[0,\wh{T})$ is a diffeomorphism.
  Furthermore, the following statements hold true:
  \begin{enumerate}[label=\textup{(\roman*)}]
  \item\label{itm:thm2.10.i} If $u\in \CC([0,T);Y_1)\cap
    \CC^1([0,T);X)$ solves the Cauchy problem for \eqref{eq:burgersdd'} with
      initial condition $u(0)=u_0\in Y_1$, then $v:\tau\mapsto
    a\bigl(g(\tau)^{-1}\bigr) u\bigl(\sigma^{-1}(\tau)\bigr)$ belongs to
    $\CC([0,T);Y_1)\cap\CC^1([0,T);X)$ and solves
    \begin{equation}\label{eq:2.4.1}
      v_\tau = F(v)-T_\mathbbm{1}a\,v[\mu],\quad v(0)=u_0.
    \end{equation}
  \item\label{itm:thm2.10.ii} If $v\in \CC([0,\wh{T});Y_1)\cap
    \CC^1([0,\wh{T});X)$ solves the Cauchy problem \eqref{eq:2.4.1}, then $u:t\mapsto
    a\bigl(g(\sigma(t))\bigr) v(\sigma(t))$ belongs to
    $\CC([0,T);Y_1)\cap \CC^1([0,T);X)$ and solves the Cauchy problem for 
    \eqref{eq:burgersdd'} with $u(0)=u_0$.
  \end{enumerate}
\end{theorem}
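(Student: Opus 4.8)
The plan is to first establish the existence, uniqueness, and diffeomorphism properties of the reconstruction equations \eqref{eq:recbasic}, and then verify the two equivalence statements by differentiating the ansatz $u(t)=a(g(\sigma(t)))v(\sigma(t))$ and using the equivariance relation \eqref{eq:rel} together with the chain rule for the group action.  For the first part, equation \eqref{eq:thm.2.1} is a non-autonomous ODE on the finite-dimensional Lie group $G$ with a continuous (indeed smooth in $g$) right-hand side, so the standard Picard--Lindelöf theorem gives a unique maximally extended $\CC^1$-solution $g\in\CC^1([0,\wh T);G)$ — here one uses that $T_\mathbbm{1}L_{g}\mu$ depends smoothly on $g$ by the explicit formula \eqref{eq:GdLmult}.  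Once $g$ is known, \eqref{eq:thm.2.2} is a scalar non-autonomous ODE $\dot\sigma=m(g(\sigma))$ with $m(g(\cdot))=\alpha(\cdot)^{2-2p}>0$ continuous and strictly positive, so $\sigma$ exists, is unique, is strictly increasing (hence injective), and $\dot\sigma>0$ gives that $\sigma$ is a $\CC^1$-diffeomorphism onto its image; one then argues that $T$ is chosen maximal so that the range is exactly $[0,\wh T)$ (if $\sigma$ stayed bounded away from $\wh T$ on a finite maximal interval, positivity and continuity of $m\circ g$ would allow extension, a contradiction).

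For part \ref{itm:thm2.10.i}, I would set $v(\tau):=a(g(\tau)^{-1})u(\sigma^{-1}(\tau))$ and compute $v_\tau$.  By Lemma~\ref{lem:op_prop_action}, for $u\in Y_1$ the map $g\mapsto a(g)u$ is $\CC^1$, and $a(g)$ is bounded on both $X$ and $Y_1$; combined with $u\in\CC([0,T);Y_1)\cap\CC^1([0,T);X)$ and the $\CC^1$-regularity of $g$ and $\sigma^{-1}$, the product rule gives $v\in\CC([0,T);Y_1)\cap\CC^1([0,T);X)$ with
\[
  v_\tau = \bigl(\tfrac{d}{d\tau}a(g(\tau)^{-1})\bigr)u(\sigma^{-1}(\tau))
  + a(g(\tau)^{-1})\,u_t(\sigma^{-1}(\tau))\,\tfrac{d}{d\tau}\sigma^{-1}(\tau).
\]
For the first term one uses that $\tfrac{d}{d\tau}a(g(\tau)^{-1})w = -a(g(\tau)^{-1})T_\mathbbm{1}a\,w[\mu(\tau)]$ applied at $w=u(\sigma^{-1}(\tau))=a(g(\tau))v(\tau)$; here the sign and the appearance of $\mu$ come from differentiating the identity $a(g^{-1})a(g)=\mathrm{id}$ and the relation \eqref{eq:GdLmult_inv} between $T_gL_{g^{-1}}$ and $T_\mathbbm{1}L_g$.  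For the second term one substitutes $u_t=F(u)=F(a(g(\sigma^{-1})))v$, applies the equivariance \eqref{eq:rel} to get $F(a(g)v)=m(g)a(g)F(v)$, and uses $\tfrac{d}{d\tau}\sigma^{-1}(\tau)=1/\dot\sigma(\sigma^{-1}(\tau))=1/m(g(\tau))$ from \eqref{eq:thm.2.2}, so the factors $m(g)$ cancel and one is left with $a(g^{-1})a(g)F(v)=F(v)$.  Collecting terms yields $v_\tau=F(v)-T_\mathbbm{1}a\,v[\mu]$, and $v(0)=a(\mathbbm{1})u(0)=u_0$.  Part \ref{itm:thm2.10.ii} is the reverse computation: starting from a solution $v$ of \eqref{eq:2.4.1}, define $u(t)=a(g(\sigma(t)))v(\sigma(t))$, differentiate using the same chain-rule and equivariance ingredients together with $\dot\sigma=m(g\circ\sigma)$ and $g'=T_\mathbbm{1}L_g\mu$, and check that the $T_\mathbbm{1}a\,v[\mu]$-terms cancel so that $u_t=F(u)$.

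The main obstacle I anticipate is the rigorous justification of differentiating $\tau\mapsto a(g(\tau)^{-1})u(\sigma^{-1}(\tau))$ as a curve in the Banach space $X$ (and checking continuity into $Y_1$), since this mixes the finite-dimensional $\CC^1$-dependence $g\mapsto a(g)w$ from Lemma~\ref{lem:op_prop_action}(c) with the only-$\CC^1([0,T);X)$ time regularity of $u$ — one must be careful that the composite is differentiable in the $X$-norm and that $T_\mathbbm{1}a\,v[\mu]$ makes sense, which is exactly why the profile is required to live in $Y_1=\CC^2_\scal$ or $H^2_\scal$ (so that $T_\mathbbm{1}a\,v[\mu]\in X$ by \eqref{eq:defCCscal}, \eqref{eq:defHscal}).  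The remaining bookkeeping — tracking the two chain-rule factors, the $m(g)$ cancellation, and the sign in the derivative of $g\mapsto a(g^{-1})$ — is routine once the framework of Sections~\ref{sec:can_sym} and~\ref{sec:act_gen} is in place.
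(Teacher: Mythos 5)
Your strategy is essentially the paper's: solve \eqref{eq:thm.2.1} and then \eqref{eq:thm.2.2}, use $\dot\sigma=m(g(\sigma))>0$ for the diffeomorphism property, and prove \ref{itm:thm2.10.i}, \ref{itm:thm2.10.ii} by differentiating the relation between $u$ and $v$ and invoking the equivariance of Proposition~\ref{prop:symburg} together with \eqref{eq:GdLmult_inv}; the paper differentiates the equivalent identity $a(g(\tau))v(\tau)=u(\sigma^{-1}(\tau))$ via a difference-quotient expansion, so it never needs the derivative of $\tau\mapsto a(g(\tau)^{-1})$, but that is a cosmetic difference. Two points in your write-up need repair. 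First, the displayed rule $\tfrac{d}{d\tau}a(g(\tau)^{-1})w=-a(g(\tau)^{-1})T_\mathbbm{1}a\,w[\mu(\tau)]$ is incorrect as written: differentiating the inverse correctly gives $\tfrac{d}{d\tau}\bigl[a(g(\tau)^{-1})\bigr]w=-T_\mathbbm{1}a\,\bigl[a(g(\tau)^{-1})w\bigr][\mu(\tau)]$. Since $G$ is non-abelian, $a(g^{-1})$ does not commute with $T_\mathbbm{1}a\,\cdot[\mu]$; your version equals $-T_\mathbbm{1}a\,\bigl[a(g^{-1})w\bigr][\mathrm{Ad}_{g^{-1}}\mu]$, so evaluated at $w=a(g(\tau))v(\tau)$ it would produce the term $T_\mathbbm{1}a\,v[\mathrm{Ad}_{g(\tau)^{-1}}\mu(\tau)]$ rather than $T_\mathbbm{1}a\,v[\mu(\tau)]$ in \eqref{eq:2.4.1}. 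The derivation you actually cite (differentiating $a(g^{-1})a(g)=\operatorname{id}$ and using \eqref{eq:GdLmult_inv}) yields the corrected formula, with which your computation closes exactly as the paper's does; but the formula must be fixed, otherwise the co-moving equation comes out Ad-twisted. Second, Picard--Lindelöf only gives a local solution of \eqref{eq:thm.2.1}; the assertion $g\in\CC^1([0,\wh{T});G)$ (no blow-up before $\wh{T}$) requires the structure visible in \eqref{eq:GdLmult}: the $\alpha$-equation $\alpha'=\alpha\mu_1$ decouples and is linear, and then the $Q$- and $b$-equations are linear with continuous coefficients, so the solution extends to all of $[0,\wh{T})$ --- this is precisely the argument the paper gives, and your plan should state it rather than appeal to local well-posedness alone. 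Your treatment of $\sigma$ (strict monotonicity, maximality forcing the range to be $[0,\wh{T})$) and your identification of the regularity bookkeeping via Lemma~\ref{lem:op_prop_action} as the delicate point match the paper's proof.
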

\begin{proof}
  By \eqref{eq:GdLmult}, the differential equation for the first
  component of $g$ in \eqref{eq:thm.2.1} decouples and can be solved
  first.  It follows that the solution for the first component exists
  globally in $[0,\wh{T})$.  In a subsequent step, the remaining
  differential equations for the other components of $g$ can be solved.  More
  precisely, knowing the first component of $g$ in $[0,\wh{T})$, the remaining
  ODEs become linear
  and hence the solution $g$ exists globally and belongs to
  $\CC^1([0,\wh{T});G)$.  In the next step, one uses that $g$ is known
  and $m\circ g\in \CC^1([0,\wh{T}),\R_+)$.  Therefore,
  \eqref{eq:thm.2.2} has a unique maximally extended solution
  $\sigma\in \CC^1([0,T);[0,\wh{T}))$.  Because of the differential
  equation $\dot{\sigma}(t)=m\bigl(g(\sigma(t))\bigr)>0$ for all
  $t\in[0,T)$ and $\sigma:[0,T)\to[0,\wh{T})$ is a diffeomorphism.

  Proof of \ref{itm:thm2.10.i}. 
  The smoothness of $v$ follows from Lemma~\ref{lem:op_prop_action}
  and the assumptions on $u$.  Because $a$ is a group homomorphism,
  $v(\tau)=a\bigl(g(\tau)^{-1}\bigr)u\bigl(\sigma^{-1}(\tau)\bigr)$ is
  equivalent to
  \begin{equation}\label{eq:2.4.1a}
    a\bigl(g(\tau)\bigr) v(\tau)=
    u\bigl(\sigma^{-1}(\tau)\bigr)\quad\forall \tau\in[0,\wh{T}).
  \end{equation}
  
  First consider the left hand side of \eqref{eq:2.4.1a}.  For
  $h\in\R$, $h$ small, with $\tau+h\in[0,\wh{T})$ we find as equalities in $X$
  \begin{equation*}
    \begin{aligned}
      &a\bigl(g(\tau+h)\bigr) v(\tau+h)-a\bigl(g(\tau)\bigr) v(\tau)\\
      &=a\bigl(g(\tau+h)\bigr) \bigl(v(\tau+h)-v(\tau)\bigr)+
      a\bigl(g(\tau+h)\bigr) v(\tau)
      -a\bigl(g(\tau)\bigr) v(\tau)\\
      &=a\bigl(g(\tau+h)\bigr) \bigl(v_\tau(\tau)h+o(|h|)\bigr)
      +a\bigl(g(\tau)\bigr)
      \Bigl(a\bigl(L_{g(\tau)^{-1}}g(\tau+h)\bigr) v(\tau) -
      a\bigl(L_{g(\tau)^{-1}}g(\tau)\bigr) v(\tau)\\
      &=a\bigl(g(\tau+h)\bigr)v_\tau(\tau)h+o(|h|) + 
      a\bigl(g(\tau)\bigr) T_\mathbbm{1}a\, v(\tau)\bigl[
      T_{g(\tau)}L_{g(\tau)^{-1}}\bigl(g'(\tau)h+o(|h|)\bigr)\bigr].
    \end{aligned}
  \end{equation*}
  For the last equality one must use a chart of $G$ at $g(\tau)$ and a
  chart of $G$ at $\mathbbm{1}$.  Note that the first $o$-term in the last line
  belongs to $X$ and the other one to $\T_{g(\tau)}G$.
  The above equalities show 
  \begin{multline}\label{eq:2.4.2a}
    \lim_{h\to 0} \frac{1}{h}\Bigl(
    a\bigl(g(\tau+h)\bigr) v(\tau+h)-a\bigl(g(\tau)\bigr)
    v(\tau)\Bigr)\\
    = a\bigl(g(\tau)\bigr) v_\tau(\tau) + a\bigl(g(\tau)\bigr)
    T_\mathbbm{1} a \, v(\tau)\bigl[T_{g(\tau)}L_{g(\tau)^{-1}}
    g'(\tau)\bigr],
  \end{multline}
  where the limit exists in $X$.  This is the derivative of the left
  hand side of \eqref{eq:2.4.1a} with respect to $\tau$.
  
  Differentiation of the right hand side of \eqref{eq:2.4.1a} at
  $\tau$ yields by the chain rule the identities
  \begin{equation}\label{eq:2.4.2b}
    u_t\bigl(\sigma^{-1}(\tau)\bigr)
    \frac{1}{\dot\sigma\bigl(\sigma^{-1}(\tau)\bigr)}
    =\frac{1}{m\bigl(g(\tau)\bigr)}
    F\bigl(u(\sigma^{-1}(\tau))\bigr)
    = m\bigl(g(\tau)^{-1}\bigr) F\bigl(u(\sigma^{-1}(\tau))\bigr),
  \end{equation}
  where we used \eqref{eq:thm.2.2}, \eqref{eq:burgersdd'}, and that $m$ is a
  group homomorphism.

  Equating \eqref{eq:2.4.2a} and \eqref{eq:2.4.2b} and
  application of $a\bigl(g(\tau)^{-1}\bigr)$ to both sides leads to
  \[v_\tau(\tau) + T_\mathbbm{1} a \,
  v(\tau)\bigl[T_{g(\tau)}L_{g(\tau)^{-1}}
  g'(\tau)\bigr]=a\bigl(g(\tau)^{-1}\bigr) m\bigl(g(\tau)^{-1}\bigr)
  F\bigl(u(\sigma^{-1}(\tau))\bigr)\]
  as an equality in $X$.  Using equation
  \eqref{eq:thm.2.1} and the symmetry property,
  Lemma~\ref{prop:symburg}, then proves the asserted equality
  \eqref{eq:2.4.1}.
  
  Proof of \ref{itm:thm2.10.ii}.  For all $t\in [0,T)$ let $u$ be given by
  \begin{equation}\label{eq:2.4.3}
    u(t)=a\bigl(g(\sigma(t))\bigr) v\bigl(\sigma(t)\bigr).
  \end{equation}
  The asserted smoothness of $u$ follows from
  Lemma~\ref{lem:op_prop_action} and we may differentiate
  \eqref{eq:2.4.3} with respect to $t$ as in the
  proof of \ref{itm:thm2.10.i}, which leads to
  \begin{equation}\label{eq:2.4.4}
    \begin{aligned}
    u_t(t)&=a\bigl(g(\sigma(t))\bigr)
    T_{\mathbbm{1}}a\,v\bigl(\sigma(t)\bigr)
    \bigl[T_{g(\sigma(t))}L_{g(\sigma(t))^{-1}}
    g'\bigl(\sigma(t)\bigr) \dot{\sigma}(t)\bigr] +
    a\bigl(g(\sigma(t))\bigr) v_\tau(\sigma(t))\dot{\sigma}(t)\\
    &=a\bigl(g(\sigma(t))\bigr)
    \Bigl(T_\mathbbm{1}a\,v\bigl(\sigma(t)\bigr)[\mu(\sigma(t))
    ] +v_\tau(\sigma(t))\Bigr)m\bigl(g(\sigma(t))\bigr)\\
    &=a\bigl(g(\sigma(t))\bigr)
    F\bigl(v(\sigma(t))\bigr)
    m\bigl(g(\sigma(t))\bigr)
    =F\bigl(u(t)\bigr),
    \end{aligned}
  \end{equation}
  where we used the differential equations for $g$ and $\sigma$ in the
  second equality, the PDE for $v$ in the third equality, and the
  symmetry property of $F$ (see Proposition~\ref{prop:symburg}) in the
  last equality.  Equation~\eqref{eq:2.4.4} holds as an equality in
  $X$ for all $t\in [0,T)$, what finishes the proof. 
\end{proof}

Assume that $u$ is a similarity solution in the sense of
Definition~\ref{def:simsol}, i.e.\
\begin{equation}\label{eq:2.3.1z}
  u(t)=a\bigl(g(\sigma(t))\bigr)\sol{v}
\end{equation}
for suitable functions
$\sigma\in C^1(J,\R)$, $g\in \CC^1(\sigma(J),G)$ and $\sol{v}\in Y_1$.
By writing the right hand side of \eqref{eq:2.3.1z} in the form
$a\bigl(g(\sigma(t))g(\sigma(0))^{-1}\bigr)
a\bigl(g(\sigma(0))\bigr)\sol{v}$, we may assume without loss of
generality that $u_0:=u(0)=\sol{v}$.
When we differentiate \eqref{eq:2.3.1z} and use that $u$ solves the PDE
\eqref{eq:burgersdd'}, we obtain
\[u_t=F\bigl(a(g(\sigma(t)))\sol{v}\bigr)= a\bigl(g(\sigma(t))\bigr)
T_\mathbbm{1} a\,\sol{v}[T_{g(\sigma(t))}L_{g(\sigma(t))^{-1}}
g'(\sigma(t))\dot{\sigma}(t)].\]
With the symmetry property of $F$ from Proposition~\ref{prop:symburg},
this equality is equivalent to
\[0=F(\sol{v})-T_\mathbbm{1} a\,\sol{v}\Bigl[
T_{g(\sigma(t))}L_{g(\sigma(t))^{-1}}
g'(\sigma(t))\frac{\dot{\sigma}(t)}{m(g(\sigma(t)))}\Bigr].\]
Under the assumption that
$T_{\mathbbm{1}}a\,\sol{v}[\epsilon_j]$, $j=1,\ldots,\dim(G)$ are
linearly independent for any basis
$\{\epsilon_1,\ldots,\epsilon_{\dim(G)}\}$ of $\liealg{g}$, the
function $\sol{\mu}:=T_{g(\sigma(t))}L_{g(\sigma(t))^{-1}}
g'(\sigma(t))\frac{\dot{\sigma}(t)}{m(g(\sigma(t)))}$ must be
independent of $t$.  Therefore, the tuple $(\sol{v},\sol{\mu})\in Y_1\times
\liealg{g}$ solves
\[0=F(\sol{v})-T_\mathbbm{1}a\,\sol{v}[\sol{\mu}],\text{ and }
\sol{v}=u_0.\]
By Theorem~\ref{thm:keyfreeze}~\ref{itm:thm2.10.ii} the function
$\sol{u}$, given by
$\sol{u}(t)=a\bigl(\sol{g}(\sol{\sigma}(t))\bigr) \sol{v}$, where
$\sol{g}(\tau)=\exp(\sol{\mu}\tau)$ for all $\tau\ge 0$ and $\sol{\sigma}$ solves
$\dot{\sol{\sigma}}(t)=m\bigl(\sol{g}(\sol{\sigma}(t)))$,
$\sigma(0)=0$, is a solution to the Cauchy-problem 
\[u_t=F(u),\quad u(0)=u_0.\]
By uniqueness of this solution, we find $u(t)=a\bigl(g(\sigma(t))\bigr)
\sol{v}=a\bigl(\exp(\sol{\mu}\sol{\sigma}(t))\bigr) \sol{v}$.
Recollecting this discussion, we obtain the following result.
\begin{proposition}\label{prop:simsol}
  A function $u$ is a similarity solution of $u_t=F(u)$ with profile
  $\sol{v}=u(0)\in Y_1$ for which $T_\mathbbm{1}a\,\sol{v}$ is injective, 
  if and only if
  there is
  $\sol{\mu}=(\sol{\mu}_1,\sol{\mu}_2,\sol{\mu}_3)\in\liealg{g}=\R\times
  \so(d-1)\times \R^d$ with
  \[0=F(\sol{v})-T_\mathbbm{1}a\,\sol{v}[\sol{\mu}].\]
  Moreover, in this case $u(t)= a\bigl(\exp(\sol{\mu}\sol{\sigma}(t))\bigr) \sol{v}$,
  where $\sol{\sigma}$ is given by
  \[\sol{\sigma}(t)=\frac{\ln\bigl((2p-2)\sol{\mu}_1t+1\bigr)}{(2p-2)\sol{\mu}_1},\]
  which solves
  $\dot{\sigma}=m\bigl(\exp(\sol{\mu}\,\sigma)\bigr)=
  e^{(2-2p)\sol{\mu}_1\sigma}$, $\sigma(0)=0$.
\end{proposition}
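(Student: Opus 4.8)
The plan is to prove the two implications separately, in both cases by differentiating the similarity ansatz and comparing with $u_t=F(u)$ via the equivariance of $F$ (Proposition~\ref{prop:symburg}) and the equivalence of Theorem~\ref{thm:keyfreeze}. The only ingredient beyond that theorem is the injectivity hypothesis on $T_\mathbbm{1}a\,\sol{v}$, which is precisely what turns ``$u$ evolves in the group orbit'' into ``there exists a \emph{constant} $\sol{\mu}$''.

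For the ``only if'' direction, suppose $u$ is a similarity solution with profile $\sol{v}\in Y_1$, so $u(t)=a\bigl(g(\sigma(t))\bigr)\sol{v}$ on an interval $J$ with $\dot\sigma>0$ and $g\in\CC^1(\sigma(J),G)$. Using that $a$ is a group homomorphism, $a(g\bullet h)=a(g)\circ a(h)$, I would first absorb $g(\sigma(0))$ into the profile, replacing $\sol{v}$ by $a\bigl(g(\sigma(0))\bigr)\sol{v}\in Y_1$, so that without loss of generality $u(0)=\sol{v}=u_0$. Differentiating $t\mapsto a\bigl(g(\sigma(t))\bigr)\sol{v}$ as a map into $X$ (permissible since $\sol{v}\in Y_1$, by Lemma~\ref{lem:op_prop_action}(c), and carried out with a chart of $G$ at $g(\sigma(t))$ and one at $\mathbbm{1}$, exactly as in the proof of Theorem~\ref{thm:keyfreeze}) gives
\[u_t(t)=a\bigl(g(\sigma(t))\bigr)\,T_\mathbbm{1}a\,\sol{v}\bigl[T_{g(\sigma(t))}L_{g(\sigma(t))^{-1}}g'(\sigma(t))\,\dot\sigma(t)\bigr].\]
On the other hand $u_t=F(u)=F\bigl(a(g(\sigma(t)))\sol{v}\bigr)=m\bigl(g(\sigma(t))\bigr)\,a\bigl(g(\sigma(t))\bigr)F(\sol{v})$ by Proposition~\ref{prop:symburg}. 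Applying $a\bigl(g(\sigma(t))^{-1}\bigr)$ to both sides, dividing by $m\bigl(g(\sigma(t))\bigr)>0$ and using linearity of $T_\mathbbm{1}a\,\sol{v}$ yields
\[F(\sol{v})=T_\mathbbm{1}a\,\sol{v}\Bigl[\tfrac{\dot\sigma(t)}{m(g(\sigma(t)))}\,T_{g(\sigma(t))}L_{g(\sigma(t))^{-1}}g'(\sigma(t))\Bigr].\]
Since the left side does not depend on $t$ and $T_\mathbbm{1}a\,\sol{v}$ is injective, the bracketed $\liealg{g}$-valued curve must be constant; calling it $\sol{\mu}$ gives $0=F(\sol{v})-T_\mathbbm{1}a\,\sol{v}[\sol{\mu}]$ and $\sol{v}=u_0$, as required.

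For the ``if'' direction, given $(\sol{v},\sol{\mu})\in Y_1\times\liealg{g}$ with $0=F(\sol{v})-T_\mathbbm{1}a\,\sol{v}[\sol{\mu}]$, I would apply Theorem~\ref{thm:keyfreeze} with the constant control $\mu(\tau)\equiv\sol{\mu}$ and initial datum $u_0=\sol{v}$: then $g(\tau)=\exp(\sol{\mu}\tau)$ solves \eqref{eq:thm.2.1}, the constant curve $v(\tau)\equiv\sol{v}$ is a stationary solution of \eqref{eq:2.4.1} because $v_\tau=0=F(\sol{v})-T_\mathbbm{1}a\,\sol{v}[\sol{\mu}]$, and part~\ref{itm:thm2.10.ii} of the theorem shows that $u(t)=a\bigl(\exp(\sol{\mu}\sigma(t))\bigr)\sol{v}$ solves $u_t=F(u)$, $u(0)=u_0$, where $\sigma$ solves $\dot\sigma=m\bigl(\exp(\sol{\mu}\sigma)\bigr)$, $\sigma(0)=0$. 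This $u$ is by construction a similarity solution with profile $\sol{v}$. The ``moreover'' part then follows: uniqueness for the Cauchy problem (Remark~\ref{rem:wellposed}) identifies \emph{any} similarity solution with profile $\sol{v}$ with the $u$ just constructed; and since by \eqref{eq:lieexp} the first component of $\exp(\sol{\mu}\sigma)$ is $\exp(\sol{\mu}_1\sigma)$, \eqref{eq:defm} gives $m\bigl(\exp(\sol{\mu}\sigma)\bigr)=e^{(2-2p)\sol{\mu}_1\sigma}$, and separating variables in $\dot\sigma=e^{(2-2p)\sol{\mu}_1\sigma}$, $\sigma(0)=0$, produces the stated formula $\sol{\sigma}(t)=\frac{\ln((2p-2)\sol{\mu}_1t+1)}{(2p-2)\sol{\mu}_1}$ (with $\sol{\sigma}(t)=t$ understood when $\sol{\mu}_1=0$).

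The main obstacle, such as it is, lies in the differentiation step of the ``only if'' direction: one must justify that $t\mapsto a\bigl(g(\sigma(t))\bigr)\sol{v}$ is differentiable into $X$ with the asserted derivative — this is where $\sol{v}\in Y_1$ and Lemma~\ref{lem:op_prop_action}(c) are needed, together with the chart computation borrowed from the proof of Theorem~\ref{thm:keyfreeze} — and one must check that ``$T_\mathbbm{1}a\,\sol{v}$ injective'' genuinely forces the bracketed curve to be constant (equivalently: expanding it in a basis $\epsilon_1,\dots,\epsilon_{\dim\liealg{g}}$ of $\liealg{g}$, its coefficients are the uniquely determined coordinates of $F(\sol{v})$ and hence independent of $t$). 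Everything else reduces to a direct appeal to Theorem~\ref{thm:keyfreeze} together with the elementary scalar ODE above.
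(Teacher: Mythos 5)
Your proposal is correct and follows essentially the same route as the paper: differentiate the similarity ansatz, use the equivariance of $F$ (Proposition~\ref{prop:symburg}) together with the injectivity of $T_\mathbbm{1}a\,\sol{v}$ to force a constant $\sol{\mu}$, then invoke Theorem~\ref{thm:keyfreeze}~\ref{itm:thm2.10.ii} with $\mu\equiv\sol{\mu}$ and uniqueness of the Cauchy problem, and finally solve the scalar ODE $\dot{\sigma}=e^{(2-2p)\sol{\mu}_1\sigma}$ explicitly. The only (harmless) additions are your explicit appeal to Remark~\ref{rem:wellposed} for uniqueness and the remark that $\sol{\sigma}(t)=t$ when $\sol{\mu}_1=0$.
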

\begin{remark}\label{rem:nonSim}
  Note that if $p\neq\frac{d+1}{d}$ there exist no similarity solutions with a
  localized finite mass profile $\sol{v}$ and nontrivial scaling $\sol{\mu}_1$.
  That is, $\sol{v}$ and $\sol{\mu}_1$ satisfy
  $\sol{v}\in H^2(\R^d)\cap L^\infty(\R^d)$,
  $|\gradient \sol{v}(x)|+|\sol{v}(x)|^p\le \const |x|^d$,
  $\int_{\R^d} \sol{v}(x)\,dx\ne 0$, and $\sol{\mu}_1\ne 0$. 

  This follows since by Proposition~\ref{prop:simsol} also the function $u$,
  given by $u(t)=a\bigl(\exp(\sol{\mu}\,\sol{\sigma}(t))\bigr)\sol{v}$ has the
  decay properties of $\sol{v}$, so that Gau\ss{}' Theorem on the one hand shows
  \[\frac{d}{dt}\int_{\R^d}u(x,t)\,dx=0.\]
  On the other hand, 
  $p>1$ and $\sol{\mu}_1\ne 0$ imply
  $\sol{\alpha}(t)=\bigl((2p-2)\sol{\mu}_1t+1\bigr)^{\frac{1}{2p-2}}\ne 1$ by
  Proposition~\ref{prop:simsol}, so that
  \[\begin{aligned}\int_{\R^d} u(x,t)\,dx &= \int_{\R^d}
    \alpha(t)^{-1}\sol{v}\bigl(\sol{\alpha}(t)^{1-p}\wt{Q}(t)\bigl(x-b(t)\bigr)\,dx
  =\int_{\R^d} \alpha(t)^{dp-p-1}\sol{v}(y)\,dy.\end{aligned}\]
  Then $\int_{\R^d}u(x,t)\,dx$ is constant if and only if $dp-d=1$.

  Also, we can look at this from taking the point of view of the solution in the
  co-moving coordinates, i.e.\ $v$ given by \eqref{eq:2.4.aa}.  
  Because of the divergence theorem and the assumption of localization
  (sufficiently fast decay of $|v(\xi)|$ and $|\gradient v(\xi)|$ as $|\xi|\to
  \infty$) this satisfies the identity
  \[\frac{d}{d\tau} \int_{\R^d} v(\xi,\tau)\,d\xi = \int_{\R^d}
    \bigl(F(v)-T_{\mathbbm{1}}a\,v[\mu]\bigr)(\xi,\tau)\,d\xi 
    = \mu_1(\tau) (1+d-dp)\int_{\R^d}v(\xi,\tau)\,d\xi.\]
  Therefore, the mass of $v$ satisfies under the above assumptions of
  localization the equation
  \begin{equation}\label{eq:nonSim}
    \int_{\R^d}v(\xi,\tau)\,d\xi =
    e^{(1+d-dp)\int_0^\tau\mu_0(\eta)\,d\eta}\int_{\R^d}v(\xi,0)\,d\xi.
  \end{equation}
\end{remark}

\begin{remark}
  The results from Theorem~\ref{thm:keyfreeze} and
  Proposition~\ref{prop:simsol} are not restricted to Burgers' type
  equations, but hold for all evolution equations which possess a
  similar symmetry structure.
\end{remark}
\begin{example}
  We now continue Example~\ref{ex:gen} and explicitly state the \emph{co-moving
  equation}~\eqref{eq:2.4.1} and the \emph{reconstruction
  equations}~\eqref{eq:recbasic} for the cases of $d=1,2,3$ spatial dimensions.
  This is needed for the actual implementation of the freezing method in the
  end.

  To enhance readability of the equations, we as usual denote elements in $G$ by
  $g=(\alpha,Q,b)$ with $\alpha\in\R_+$, $Q\in \SO(d-1)$, $b\in \R^d$
  and elements in $\liealg{g}=T_\mathbbm{1}G$ are denoted by
  $\mu=(\mu_1,\mu_2,\mu_3)$, where $\mu_1\in\R$,
  $\mu_2=\sum_{j=1}^{\dim\so(d-1)}\mu_2^j S_j\in \so(d-1)$ with
  $S_1,\ldots,S_{\dim\so(d-1)}$ a basis of $\so(d-1)$, $\mu_2$
  is identified with
  $(\mu_2^1,\dots,\mu_2^{\dim(\so(d-1))})^\top\in\R^{\dim(\so(d-1))}$, and $\mu_3=\sum_{j=1}^d
  \mu_3^j \e_j\in\R^d$.  In particular, we have 
  \begin{equation*}
    (\mu_1,0,0)=\mu_1\epsilon_1,\;
    (0,\mu_2,0)=\sum_{j=1}^{\dim\so(d-1)} \mu_2^j \epsilon_{1+j},\;
    (0,0,\mu_3)=\sum_{j=1}^d \mu_3^j \epsilon_{1+\dim\so(d-1)+j},
  \end{equation*}
  where $\epsilon_1,\ldots,\epsilon_{\dim\liealg{g}}$ is the canonical
  basis of $\liealg{g}$ which was introduced in Example~\ref{ex:gen} in the
  cases $d=1,2,3$.
  \begin{enumerate}[label=\textup{(\roman*)}]
    \item \label{itm:ex:freeze1}
      For $d=1$ we first assume that $\wh{T}>0$ and
      $\mu\in \CC([0,\wh{T});\liealg{g})$ are given.  The generators
      $T_\mathbbm{1}a\,v[\epsilon_1]$ and $T_\mathbbm{1}a\,v[\epsilon_2]$ are
      calculated in Example~\ref{ex:gen}~\ref{itm:ex:gen1d} and inserting them
      into the \emph{co-moving equation}~\eqref{eq:2.4.1} we obtain the
      explicit form
      \begin{subequations}\label{eq:2.5.2}
        \begin{equation}\label{eq:2.5.2a}
          v_\tau=\nu v_{xx}-\tfrac{1}{p}\bigl(|v|^p\bigr)_x
          +\mu_1\bigl((p-1)(xv)_x+(2-p)v\bigr)+\mu_3 v_x.
        \end{equation}
        Moreover, the functions $g$ and $\sigma$ can be obtained from the
        \emph{reconstruction equations}~\eqref{eq:thm.2.1} and
        \eqref{eq:thm.2.2}.
        With the help of \eqref{eq:GdLmult} and \eqref{eq:defm} these take the
        explicit form
        \begin{align}\label{eq:2.5.2b}
          g'&=\vect{\alpha\\b}'=
          T_\mathbbm{1}L_{(\alpha,b)}\vect{\mu_1\\\mu_3}=\vect{\alpha
          \mu_1\\\alpha \mu_3},\quad \alpha(0)=1,\,b(0)=0,\\
          \label{eq:2.5.2c}
          \dot{\sigma}&=m\bigl(g(\sigma)\bigr)
          =m\bigl(\alpha(\sigma),b(\sigma)\bigr)=\alpha(\sigma)^{2-2p},
          \quad\sigma(0)=0.
        \end{align}
      \end{subequations}
      From Theorem~\ref{thm:keyfreeze}~\ref{itm:thm2.10.ii} then follows that if
      $v\in\CC([0,\wh{T});Y_1)\cap \CC^1([0,\wh{T});X)$, 
      a solution of the original equation \eqref{eq:burgersdd'} is obtained 
      by formula \eqref{eq:2.4.aa}.
    \item \label{itm:ex:freeze2}
      For the case $d=2$ we proceed similar and obtain by using the generators
      calculated in Example~\ref{ex:gen}~\ref{itm:ex:gen2d} the co-moving
      equation
      \begin{subequations}\label{eq:2.5.4}
        \begin{equation}\label{eq:2.5.4a}
          v_\tau=\nu \laplace v
          -\tfrac{1}{p}\bigl(|v|^p\bigr)_x 
          + \mu_1(p-1)\bigl((xv)_x+(yv)_y\bigr)+\mu_1(3-2p)v
          + \mu_3^1 v_x+ \mu_3^2 v_y.
        \end{equation}
        Moreover, the reconstruction equations \eqref{eq:thm.2.1} and
        \eqref{eq:thm.2.2} become
        \begin{align}
          g'&=\vect{\alpha\\b}'=
          T_{\mathbbm{1}}L_{g}\vect{\mu_1\\\mu_3}
          =\vect{\alpha \mu_1\\\alpha \mu_3},\quad \alpha(0)=1,\,
          b(0)=\vect{0\\0},\label{eq:2.5.4b}\\
          \dot{\sigma}&=m\bigl(g(\sigma)\bigr) =
          \alpha(\sigma)^{2-2p},\quad\sigma(0)=0.\label{eq:2.5.4c} 
        \end{align}
      \end{subequations}
    \item\label{itm:ex:freeze3}
      Similar considerations yield for $d=3$ the co-moving equation
      \begin{subequations}\label{eq:2.5.7} 
        \begin{equation}\label{eq:2.5.7a}
          \begin{aligned}
            v_\tau=&\nu\laplace v - \tfrac{1}{p}\bigl(|v|^p)_{x}
            +\mu_1(p-1)\bigl((xv)_x+(yv)_y+(zv)_z\bigr)
            +\mu_1(4-3p) v\\
            &\qquad+\mu_2^1\bigl((yv)_z-(zv)_y\bigr)+\mu_3^1v_x+\mu_3^2 v_y
            +\mu_3^3v_z
          \end{aligned}
        \end{equation}
        and the reconstruction equations
        \begin{align}\label{eq:2.5.7b}
          g'&=\vect{\alpha\\Q\\b}'=
          T_\mathbbm{1}L_{g} \vect{\mu_1\\\mu_2^1
      \vect{0&-1\\1&0}\\\mu_3} =
      \vect{\alpha \mu_1\\
        \mu_2^1Q\vect{0&-1\\1&0}\\
      \alpha \mu_3},\; 
      \vect{\alpha\\Q\\b}(0)=\vect{1\\I\\0},\\
      \dot{\sigma}&=m\bigl(g(\sigma)\bigr) =
      \alpha(\sigma)^{2-2p},\quad \sigma(0)=0.\label{eq:2.5.7c} 
    \end{align}   
  \end{subequations}
  Note, that $Q\in \SO(2)$ is of the form
  $Q=\vect{\cos(\phi)&-\sin(\phi)\\ \sin(\phi)&\cos(\phi)}$, so that the
  differential equation \eqref{eq:2.5.7b} for $Q$ yields the equations
  \[\tfrac{d}{d\tau} \cos(\phi)=-\sin(\phi)\phi'= -\mu_2^1\sin(\phi),\;
  \tfrac{d}{d\tau} \sin(\phi)=\cos(\phi)\phi'=\mu_2^1\cos(\phi).\]
  Therefore, $\phi'=\mu_2^1$ completely describes the evolution of
  the $Q$-component of $g$ and using this equation, it is implicit that the
  solution to \eqref{eq:2.5.7b} always stays on the Lie-group $G$.
\end{enumerate} 
\end{example}

\subsection{Phase Conditions}
Theorem~\ref{thm:keyfreeze} relates the Cauchy-problem for
\eqref{eq:burgersdd}
\begin{equation}\label{eq:OCauchy}
  \left\{\begin{aligned}
    u_t&=\nu\laplace u-\frac{1}{p}\dd{x_1}\left(|u|^p\right)=:
    F(u),\\
    u(0)&=u_0,
  \end{aligned}
  \right.
\end{equation}
to the Cauchy-problem in the new, time-dependent coordinate system.
Roughly speaking, we can rephrase the result as follows:\\
A function $u\in\CC([0,T);Y_1)\cap\CC^1([0,T);X)$ solves \eqref{eq:OCauchy}
if and only if
the functions $v\in\CC([0,\wh{T});Y_1)\cap \CC^1([0,\wh{T});X)$,
$\mu\in\CC([0,\wh{T});\liealg{g})$, $g\in\CC^1([0,\wh{T});G)$,
$\sigma\in \CC^1([0,T);[0,\wh{T}))$ solve the system
\begin{equation}\label{eq:CoCauchyO}
  \begin{aligned}
    v_\tau&=F(v)-T_{\mathbbm{1}}av[\mu(\tau)],&v(0)&=u_0,\\
    g_\tau&=T_{\mathbbm{1}}L_{g(\tau)}[\mu(\tau)],&g(0)&=\mathbbm{1},\\
    \bigl(\sigma^{-1}\bigr)_\tau&=\frac{1}{m\bigl(g(\tau)\bigr)},
    &\sigma(0)&=0
  \end{aligned}
\end{equation}
and the functions $u$ and $v,g,\sigma$ are related by the identity
$u(t)=a\bigl(g(\sigma(t))\bigr)v\bigl(\sigma(t)\bigr)$.

Although \eqref{eq:OCauchy} is a well-posed problem, e.g. in $\R^3$
for initial data in $H^2$ (see Remark~\ref{rem:wellposed}), the
system \eqref{eq:CoCauchyO} is not well-posed, due to an ambiguity in
the choice of $\mu(\tau)$.  More precisely, the variable $\mu$
introduces $\dim\liealg{g}$ additional degrees of freedom to the
system.  As is standard in the numerical freezing method, see
e.g.~\cite{BeynThuemmler:2004}, we therefore introduce $\dim\liealg{g}$
additional algebraic equations, so called phase conditions, to cope
with this ambiguity.  In this article we restrict to two specific
choices of phase conditions, which work very well in our numerical
experiments in Section~\ref{sec:exp}.
We will actually use integral phase conditions which were originally
introduced by \cite{Doedel:1981} for the numerical approximation of
periodic orbits.  Thus we are limited to the case $X=L^2$ with
$Y_1=H^2_\scal$, and we assume the well-posedness of the problem in
these spaces from now on.  We note in passing that by using weighted
integrals, it is not difficult to allow similar phase conditions also for
the $X=\CC^0_0$ and $Y_1=\CC^2_\scal$ case.

\textbf{Type 1: Orthogonal phase condition}\\
The idea of the orthogonal phase condition is, to require that the
time-evolution of $v$ is always $L^2$-{orthogonal} to its group orbit.
This amounts to requiring 
\begin{equation}\label{eq:OPhaseO}
  0=\langle v_\tau,T_{\mathbbm{1}}av[\epsilon_j]\rangle,\quad
  j=1,\ldots,\dim\liealg{g}, 
\end{equation}
where $\{\epsilon_1,\ldots,\epsilon_{\dim\liealg{g}}\}$ is a basis of
$\liealg{g}$.  Inserting the $v$-equation of \eqref{eq:CoCauchyO} into
\eqref{eq:OPhaseO} yields
\begin{equation}\label{eq:OPhaseI}
  0=\langle
  T_{\mathbbm{1}}av[\epsilon_j],F(v)-T_{\mathbbm{1}}av[\mu]\rangle=:
  \Psi^{\mathrm{orth}}_j(v,\mu),\quad
  j=1,\ldots,\dim\liealg{g},
\end{equation}
which we call the ``\emph{orthogonal phase condition}'' and, in fact, is a
linear equation for the algebraic variable $\mu$ if $v$ is known.
Assuming the invertibility of the matrix
$\Bigl(\langle
T_{\mathbbm{1}}av[\epsilon_j],T_{\mathbbm{1}}av[\epsilon_k]
\rangle_{j,k=1\ldots\dim\liealg{g}}\Bigr)
  \in
\R^{\dim\liealg{g},\dim\liealg{g}}$,
\eqref{eq:OPhaseI} can easily be solved for the algebraic
variables $\mu$,
\[\mu=\Bigl(\langle
T_{\mathbbm{1}}av[\epsilon_j],
T_{\mathbbm{1}}av[\epsilon_k]\rangle_{j,k=1\ldots\dim\liealg{g}}\Bigr)^{-1} 
\langle
T_{\mathbbm{1}}av[\epsilon_j],F(v)\rangle_{j=1\ldots\dim\liealg{g}}.\]
In principle it is possible to insert this formula directly into
\eqref{eq:CoCauchyO}, but we rather supplement \eqref{eq:CoCauchyO} with
\eqref{eq:OPhaseI}, since we are anyway also interested in the actual value of
$\mu$ as an important constant of motion, as already argued in the introduction.

\textbf{Type 2: Fixed phase condition}\\
The idea of the fixed phase condition is, to require that the
$v$-component of the solution always lies in a fixed,
$\dim\liealg{g}$-co-dimensional hyperplane, which is given as the
level set of a fixed, linear mapping, i.e.
\begin{equation}\label{eq:FPhase}
  0=\psi_j(v) - r_j=:\Psi_j^{\mathrm{fix}}(v),\quad j=1,\ldots,\dim\liealg{g},
\end{equation}
where $\psi_j\in X^\ast$, $j=1,\ldots,\dim\liealg{g}$, are linearly
independent elements of the dual of $X$ and $r_j\in\R$.  A standard
choice, cf.~\cite{BeynThuemmler:2004}, is the following:  Assume
that there is a ``suitable'' reference function $\wh{u}$ given and
then one require that the $v$-component of the solution always
satisfies
\begin{equation}\label{eq:FPhaseS}
  0=\langle T_{\mathbbm{1}}a\wh{u}[\epsilon_j],v-\wh{u}\rangle,\quad
  j=1,\ldots,\dim\liealg{g}, 
\end{equation}
where $\{\epsilon_1,\ldots,\epsilon_{\dim\liealg{g}}\}$ is a basis of
$\liealg{g}$.  
\begin{remark}
  The phase condition \eqref{eq:FPhaseS} can also be obtained by
  requiring that the $v$-component of the solution is always better
  aligned in the $L^2$-norm to $\wh{u}$ than to any other element of
  the group orbit of $\wh{u}$, i.e.
  \begin{equation}\label{eq:L2best}
    \argmin_{g\in G}\bigl\| a(g)\wh{u}-v\bigr\|^2_{L^2}=\mathbbm{1}.
  \end{equation}
  A necessary condition for \eqref{eq:L2best} is \eqref{eq:FPhaseS}.
\end{remark}

We now augment system
\eqref{eq:CoCauchyO} with one of the phase conditions
\eqref{eq:OPhaseI} or \eqref{eq:FPhaseS} and obtain the PDAE system of the
numerical freezing method
\begin{subequations}\label{eq:CoCauchy}
  \begin{align}
    \label{eq:CoCauchyA1}
    v_\tau&=F(v)-T_{\mathbbm{1}}av[\mu(\tau)],&v(0)&=u_0,\\
    \label{eq:CoCauchyA2}
    0&=\Psi(v,\mu),&&\\
    \label{eq:CoCauchyB}
    g_\tau&=T_{\mathbbm{1}}L_{g(\tau)}[\mu(\tau)],&g(0)&=\mathbbm{1},\\
    \label{eq:CoCauchyC}
    \bigl(\rho\bigr)_\tau&=\frac{1}{m\bigl(g(\tau)\bigr)},
    &\rho(0)&=0,
  \end{align}
\end{subequations}
where $\Psi(v,\mu)$ is either $\Psi^{\mathrm{orth}}$ from
\eqref{eq:OPhaseI} or $\Psi^{\mathrm{fix}}$ from \eqref{eq:FPhase}, and
we denote $\rho=\sigma^{-1}$.

\begin{remarks}\label{rem:3.2}
  \begin{enumerate}
    \item Note that the ordinary differential equations \eqref{eq:CoCauchyB}
     and \eqref{eq:CoCauchyC} actually decouple from
     \eqref{eq:CoCauchyA1} and \eqref{eq:CoCauchyA2} and hence could be
     solved in a post-processing step.
   \item
     Observe that \eqref{eq:CoCauchy} consists of the PDE
     \eqref{eq:CoCauchyA1} which has a hyperbolic-parabolic structure,
     coupled to a system of ordinary differential equations
     \eqref{eq:CoCauchyB} and \eqref{eq:CoCauchyC} and coupled to a
     system of algebraic equations \eqref{eq:CoCauchyA2}.  Moreover, in
     \eqref{eq:CoCauchyA1} the hyperbolic part dominates for $\nu<<1$ and
     the parabolic part dominates for $\nu$ sufficiently large.   
   \item
     For the choice \eqref{eq:OPhaseI} the system \eqref{eq:CoCauchyA1},
     \eqref{eq:CoCauchyA2} is a partial differential algebraic equation
     of ``time-index'' 1 and for the choice \eqref{eq:FPhaseS}
     it is of ``time-index'' 2.  Here we understand the index as a
     differentiation index (see \cite{MartinsonBarton:2000}).
 \end{enumerate}
\end{remarks}


\section{Numerical Results}\label{sec:exp}
In this section we now present the result of several numerical experiments.  We
use a numerical second order scheme that we develop in \cite{Rottmann:2016b}.
Here we do not go into the details of the numerical scheme but only mention that
it is based on a central method-of-lines system for hyperbolic conservation
laws, adapted from \cite{KurganovTadmor:2000} to \eqref{eq:CoCauchy}, and then
fully discretized with an IMEX-Runge-Kutta time-discretization in the spirit of
\cite{AscherRuuthSpiteri:1997} to cope with the different parts of the
equation.  For details we refer to \cite{Rottmann:2016b}.  

To distinguish between the original coordinates and the coordinates of the
freezing method, we always denote original space by $x\in\R^d$ and the
original time by $t$ and the original solution in these coordinates is denoted
by $u$, whereas $\xi\in\R^d$ and $\tau\ge 0$ denote the space and
time in the new coordinates and $v$ is the solution in these new coordinates.

\subsection{1d-Experiments}
We choose the spatial step size
$\Delta \xi=0.01$ and the time step size $\Delta \tau$ is a CFL-based multiple
of $\Delta \xi$, see \cite{Rottmann:2016b}.
In all 1d-experiments we choose the initial condition given by
\begin{equation}\label{eq:1dinit}
  u_0(x)=\begin{cases}\sin(2x),&-\frac{\pi}{2}\le x\le 0,\\
    \sin(x),&0\le 0\le \pi,\\
    0,&\text{otherwise}.\end{cases}
\end{equation}
Note that this belongs to $H^1(\R)$ but not to $H^2(\R)$.  In our experiments we
observed that the method and our numerical scheme work just well also for
piecewise continuous initial data.
Moreover, if nothing else is stated, we
choose the fixed phase condition, where we choose the initial condition as a
suitable reference function.  In case the solution evolves too far away from this
reference function, we update it with the current state of the solution.

\textbf{Variation of the parameter $p$.}
In our first series of computations we consider the behavior of the freezing
method for the Cauchy-problem for Burgers' equation
\[u_t=0.4 u_{xx}-\tfrac{1}{p}\partial_x\bigl(|u|^p\bigr),\quad u(0)=u_0\]
where we choose different values for the parameter $p$.  
\begin{figure}[t!]
  \centering
  \subfigure[]{\includegraphics[height=5cm]%
    {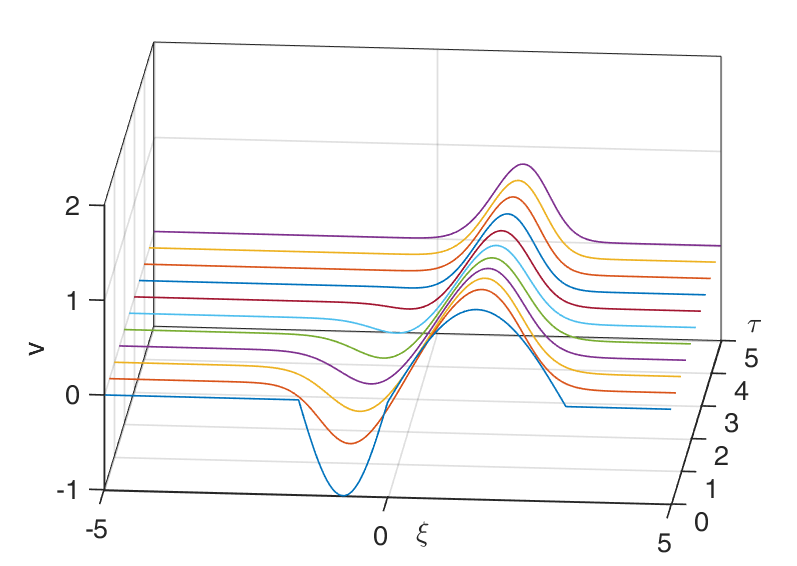}\label{fig:exp01_1}}
  \subfigure[]{\includegraphics[height=5cm]%
    {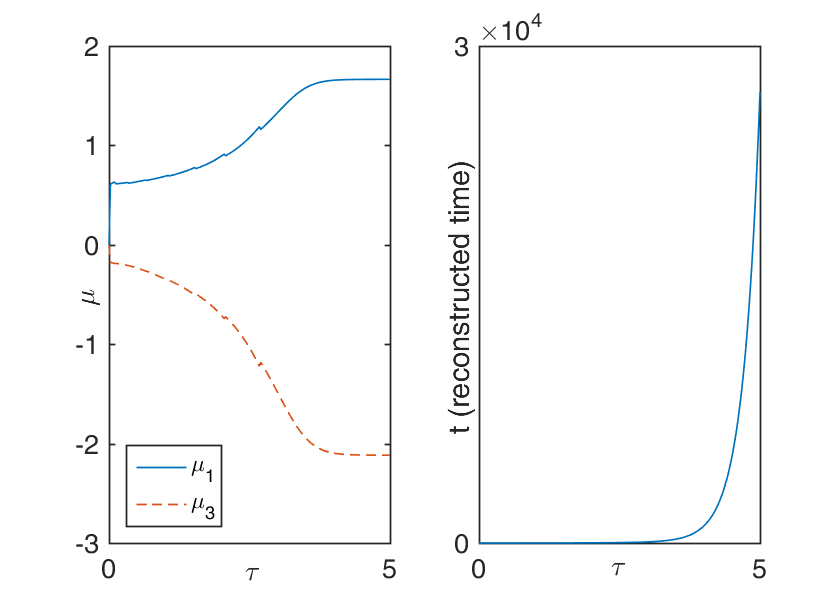}\label{fig:exp01_2}}
  \caption{Plots of the time evolution for conservative 1d-Burgers equation
    ($p=2$) with viscosity
    $\nu=0.4$ (a) in the scaled (computational) coordinates at
    different time instances, (b) the evolution
    of the variables in the Lie-algebra and the evolution of the
  original time as function of the scaled time $t(\tau)$.}
  \label{fig:nu04_evoscaled}
\end{figure} 
In Fig.~\ref{fig:nu04_evoscaled} we show the results for the freezing method
for the conservative 1d Burgers' equation (i.e. $p=2$).  One can very well
observe that the solution to the freezing PDAE \eqref{eq:CoCauchyA1},
\eqref{eq:CoCauchyA2} stabilizes as $\tau$ increases.  Moreover, also the
algebraic variables $\mu_1$ (scaling) and $\mu_3$ (spatial velocity) converge to
constant values as $\tau$ increases.  We also calculate the solution to
the reconstruction equations \eqref{eq:CoCauchyB}, \eqref{eq:CoCauchyC} and use
these to obtain the solution $u$ in the original coordinates, given by
formula \eqref{eq:2.4.aa} and shown
Fig.~\ref{fig:nu04_evoorig}.
\begin{figure}[b!]
  \centering
  \subfigure[]{\includegraphics[height=5cm]%
    {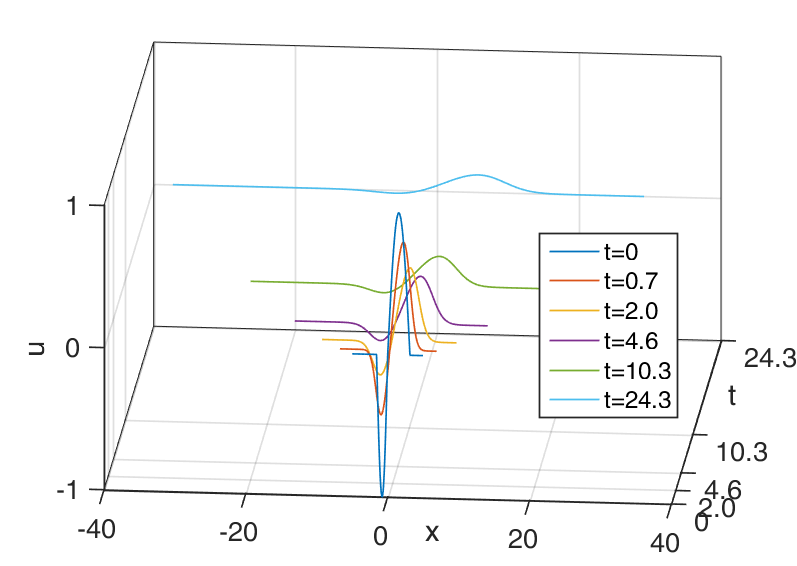}\label{fig:exp01_3a}}
  \subfigure[]{\includegraphics[height=5cm]%
    {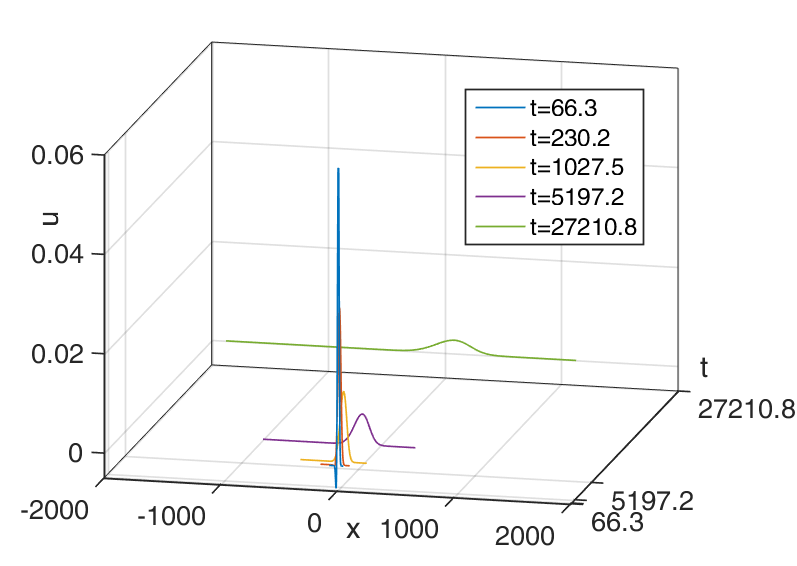}\label{fig:exp01_3b}}
  \caption{Reconstructed solutions of the 1d-Burgers'
    equation ($p=2$) with viscosity $\nu=0.4$.  Initial time interval $[0,24.3]$
  (a) and later time interval $[66.3,27210.8]$ (b).}
  \label{fig:nu04_evoorig}
\end{figure} 
The time-instances are precisely
$t=t(\tau)$ with the $\tau$ from Fig.~\ref{fig:exp01_1}.
As is well-known, in the original coordinates
the solution stabilizes to the
constant zero as time tends to infinity.
Note that the original time $t(\tau)$ increases very rapidly with $\tau$
(e.g.\ $t(5)\approx 27210.8$) and one needs a very large time-domain if the
solution is calculated in the original coordinates.

We repeat the above numerical experiment for the Cauchy problem
\[u_t=u_{xx}-\tfrac{2}{3}\partial_x\bigl(|u|^\frac{3}{2}\bigr),\quad u(0)=u_0,\]
where we changed the parameter $p$ to $p=\tfrac{3}{2}$.
The result of this computation is presented in Fig.~\ref{fig:exp02}.
\begin{figure}[t!]
  \centering
  \subfigure[]{\includegraphics[height=3.6cm]%
    {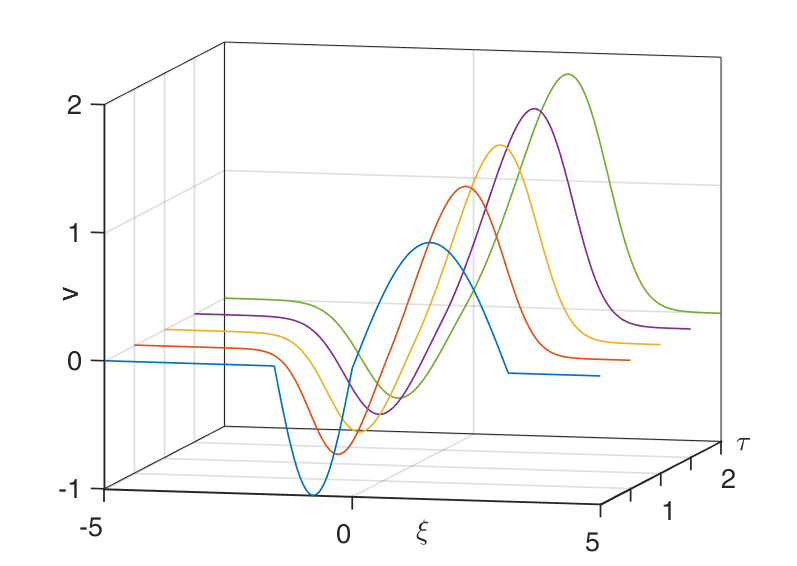}\label{fig:exp02_1}}
  \subfigure[]{\includegraphics[height=3.6cm]%
    {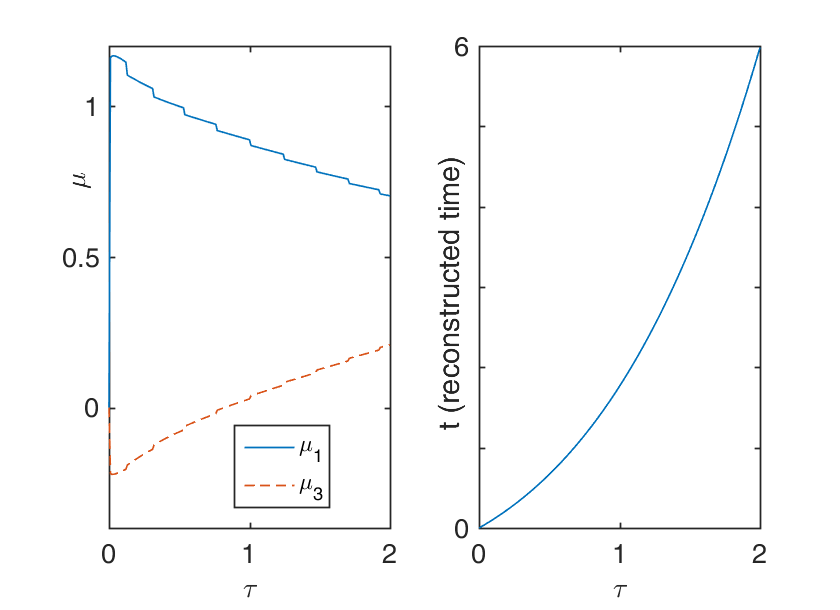}\label{fig:exp02_2}}
  \subfigure[]{\includegraphics[height=3.6cm]%
    {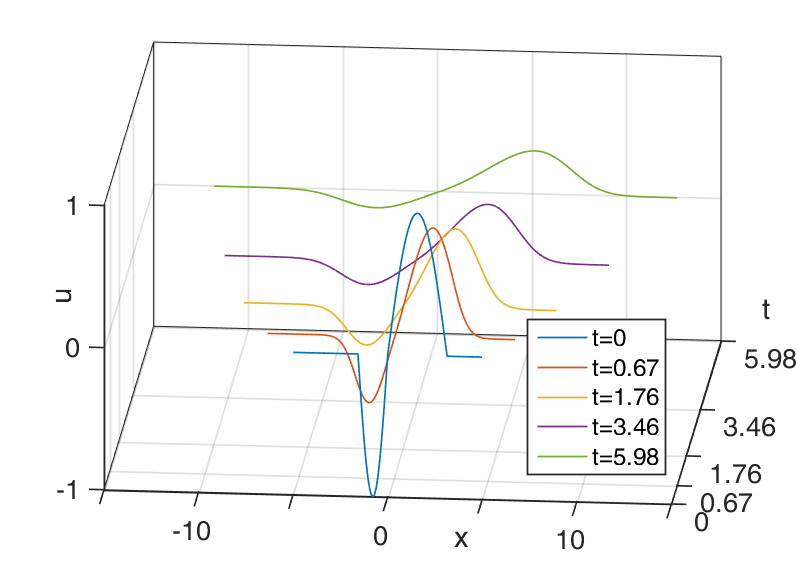}\label{fig:exp02_3}}
  \caption{Time evolution of 1d-Burgers' equation
    with $p=\tfrac{3}{2}$ and viscosity $\nu=0.4$.
    In (a) several time-instances of the numerical solution of
    the freezing method are shown, (b) shows how the algebraic
    variables $\mu_1$ (scaling) and $\mu_3$ (spatial velocity) depend on the
    scaled time $\tau$, and (c) shows the solutions from (a) 
   in the original coordinates.}
  \label{fig:exp02}
\end{figure} 
One can nicely observe that the
mass of the profile $v$ in the new coordinates grows as $\tau$ increases, which
is in accordance with Remark~\ref{rem:nonSim} and formula \eqref{eq:nonSim}
since
$p<\tfrac{d+1}{d}$ and $\mu_1>0$.  Nevertheless, we can still doe the freezing
method calculations on a fixed bounded domain and obtain the solution in the
original coordinates from the reconstruction equations.  The result is shown in
Fig.~\ref{fig:exp02_3}.

We also repeat the experiment with $p=\tfrac{5}{2}$.  From
Remark~\ref{rem:nonSim} and formula \eqref{eq:nonSim} we now expect that the
profile $v$ in the co-moving coordinates decays to zero as $\tau$ tends to
infinity.
\begin{figure}[b!]
  \centering
  \subfigure[]{\includegraphics[height=3.6cm]%
    {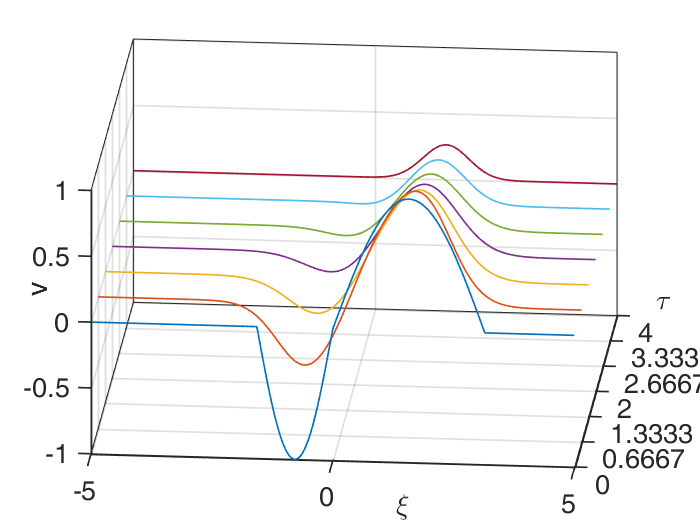}\label{fig:exp03_1}}
  \subfigure[]{\includegraphics[height=3.6cm]%
    {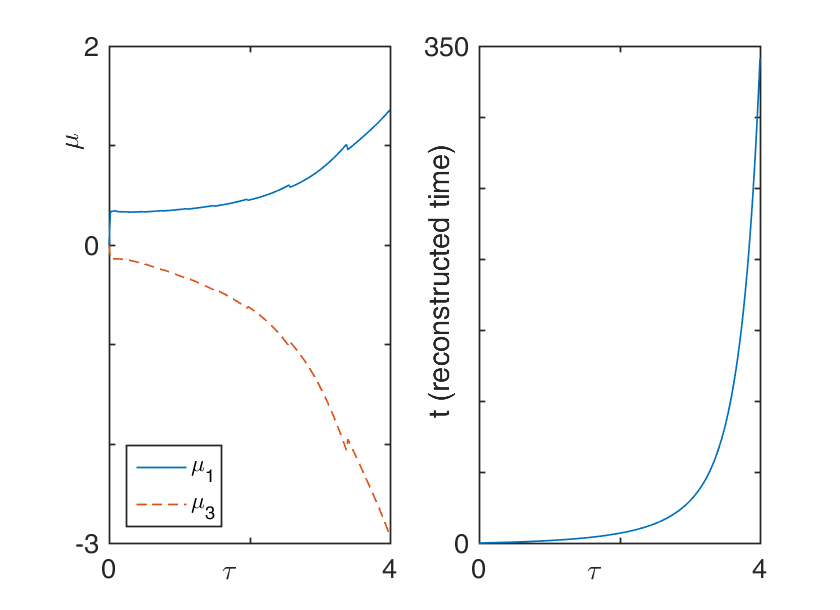}\label{fig:exp03_2}}
  \subfigure[]{\includegraphics[height=3.6cm]%
    {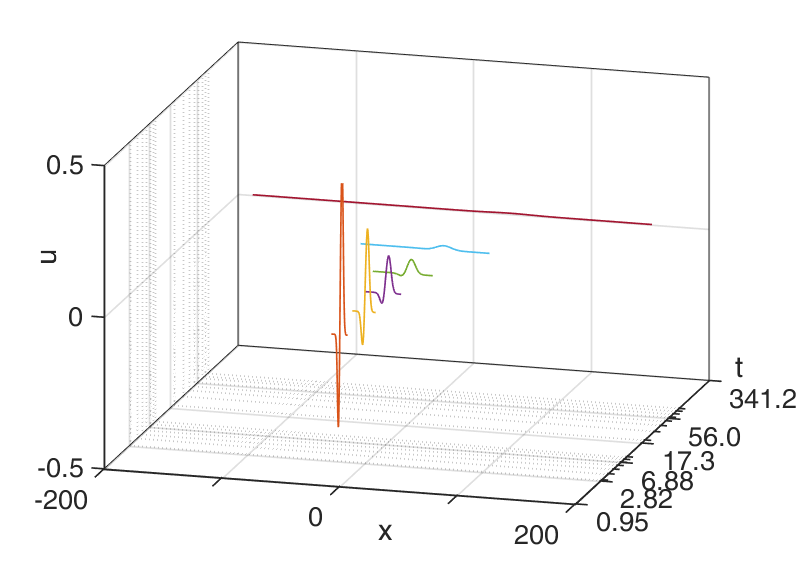}\label{fig:exp03_3}}
  \caption{Time evolution for the generalized 1d-Burgers equation
    with $p=\tfrac{5}{2}$ and viscosity $\nu=0.4$.
    In (a) the solution of the freezing method at different time-instances is
    shown, (b) shows the algebraic variables $\mu$
    and original time $t$ plotted as functions of $\tau$.
    Finally, (c) shows the solution from (a) in the original coordinates.}
  \label{fig:exp03}
\end{figure} 
The results of the simulation with the numerical freezing method are shown in
Fig.~\ref{fig:exp03} and precisely
reproduce this expectation.  Note that we actually only calculated until
$\tau=4$ in the scaled coordinates.  But $\tau=4$ corresponds to $t\approx 341$,
and in the original coordinates the profile $v(\tau=4)$, which is
calculated on the fixed domain $\xi\in[-5,5]$ corresponds to the solution in the
original coordinates on the domain $x\in [-187.44, 151.54]$.  Also
note that we have
chosen a logarithmic scale for the time-axis in Fig.~\ref{fig:exp03_3}.  We
again see that, although there is no true relative equilibrium of the equation
$u_t=u_{xx}-\tfrac{2}{5}\partial_x\bigl(|u|^{\frac{5}{2}}\bigr)$, the freezing
method allows us to do a calculation on a fixed bounded domain for a far longer
time, than it would be possible for the original problem.

\textbf{Different phase conditions.}
In all the experiments performed so far, we have chosen the fixed
phase condition given by \eqref{eq:FPhaseS}.  We now compare the results we
obtain
with the freezing method by using the fixed phase condition with the results we
obtain when we use the orthogonal phase condition.
\begin{figure}[h!t]
  \centering
  \subfigure[]{\includegraphics[height=5cm]%
    {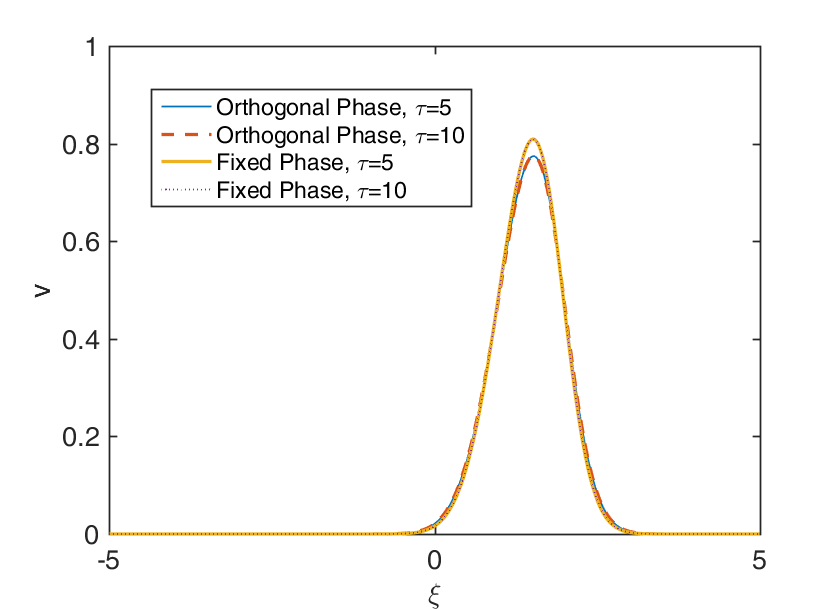}\label{fig:exp05_1}}
  \subfigure[]{\includegraphics[height=5cm]%
    {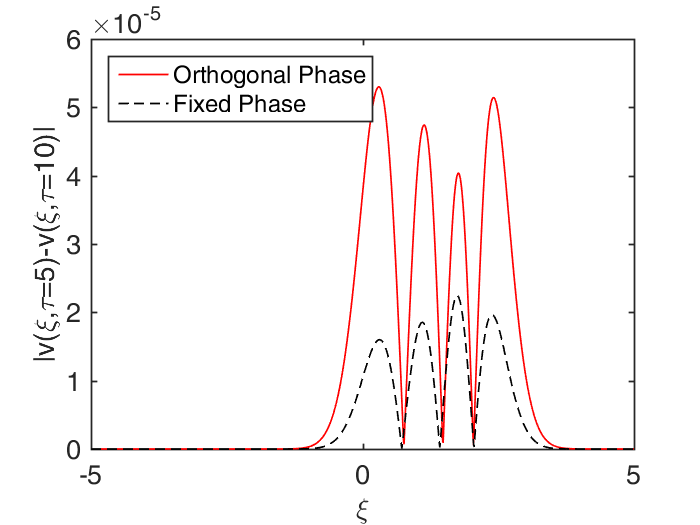}\label{fig:exp05_2}}
  \caption{Comparison of the final states for different phase conditions.
    In (a) the solutions obtained with the freezing method for the
    orthogonal and fixed phase conditions at
    $\tau=5$ and $\tau=10$ are shown.  The solutions for the same phase
    conditions do not change from $\tau=5$ to $\tau=10$.  In (b) we plot the
    actual difference of
    $|v(\xi,5)-v(\xi,10)|$. }
  \label{fig:exp05}
\end{figure} 
We again consider the conservative Burgers' equation, i.e.\ $p=2$, and choose
the viscosity $\nu=0.4$.  In Fig.~\ref{fig:exp05} we show the solution to the
freezing method obtained with the fixed phase condition at $\tau=5$ and
$\tau=10$ and also the solution obtained with the orthogonal phase condition at
the same time-instances $\tau=5$ and $\tau=10$.  In Fig.~\ref{fig:exp05_1}
these are plotted in one diagram and the solutions with the same
phase conditions but at different time-instances virtually do not differ and,
therefore, seem to be constant rest states.
The difference of the solutions to the same
phase condition but at different times is shown in Figure~\ref{fig:exp05_2}.
As is obvious from Fig.~\ref{fig:exp05_1}, these steady states do depend on the
choice of the phase condition and, moreover, we even obtain different limits for
the algebraic values.  In the fixed
phase condition case we obtain $\mu_1=1.665$ (scaling) and $\mu_3=-2.117$
(translation), in the orthogonal phase condition case we obtain $\mu_1=1.524$
and $\mu_3=-1.931$.
\begin{figure}[h!t]
  \centering
  \subfigure[]{\includegraphics[height=3.6cm]%
    {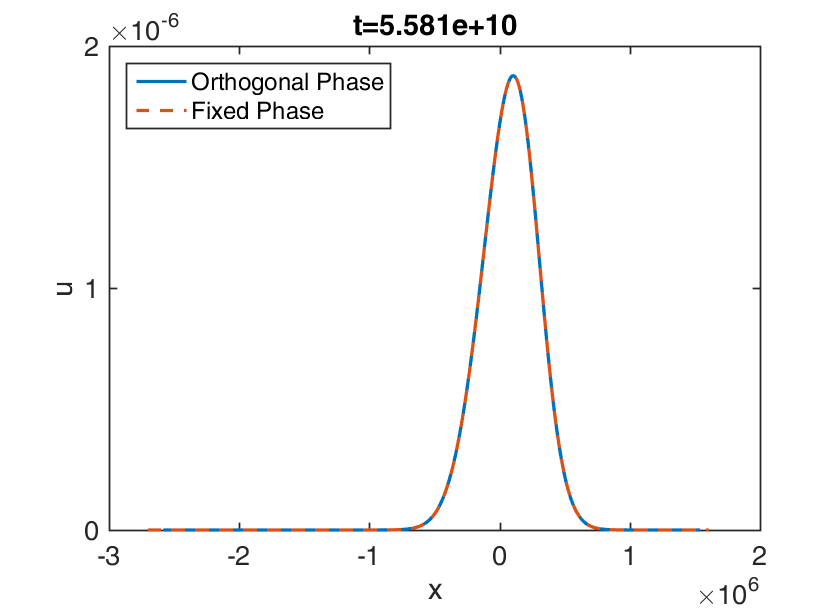}\label{fig:exp05_3}}
  \subfigure[]{\includegraphics[height=3.6cm]%
    {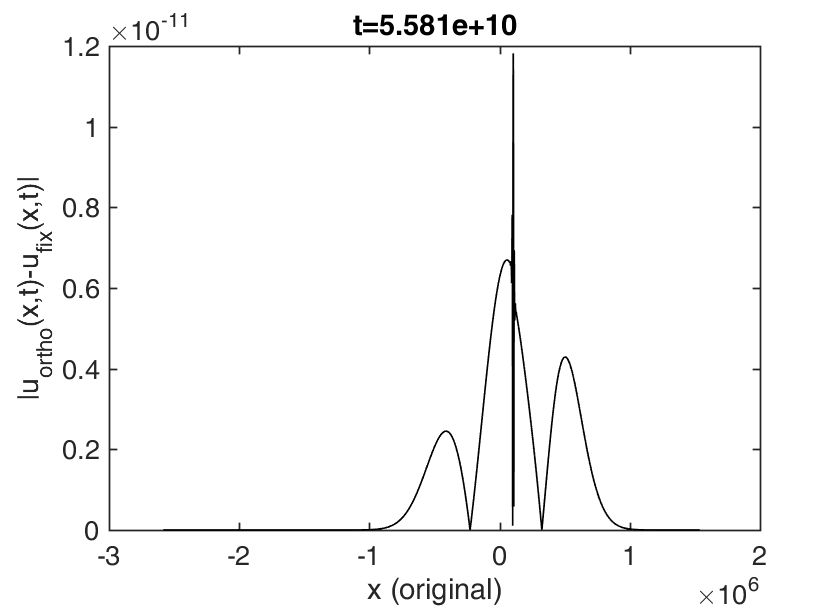}\label{fig:exp05_3b}}
  \subfigure[]{\includegraphics[height=3.6cm]%
    {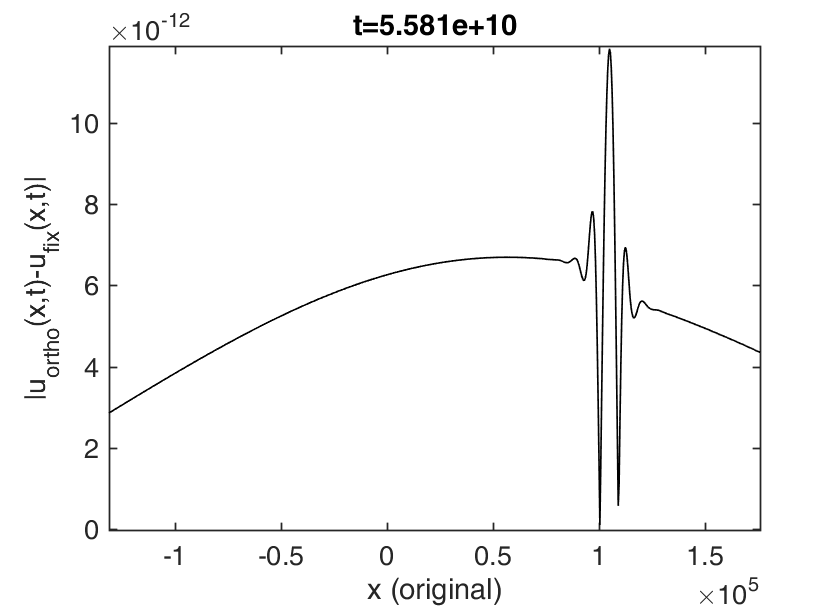}\label{fig:exp05_3c}}
    \caption{Difference of the solutions obtained at $t=5.581\cdot 10^{10}$ via
    the freezing method with orthogonal and fixed phase condition. In (a) the
  two different solutions are shown, (b) and (c) shows there difference.}
  \label{fig:exp05b}
\end{figure} 
Nevertheless, the solution in the original coordinates at the latest common
original time is plotted in Fig.~\ref{fig:exp05b} and shows only a very small
difference between the two different choices of phase conditions as plotted in
Fig.~\ref{fig:exp05_3b} and Fig.~\ref{fig:exp05_3c}.

\textbf{Metastable behavior.}
Our method directly enables us to observe the metastable
behavior in Burgers' equation with small viscosity which was first
numerically observed and analyzed in \cite{KimTzavaras:2001} and later
discussed from a dynamical systems point of view in \cite{BeckWayne:2009}.  
\begin{figure}[htb!]
  \centering
  \subfigure[]{\includegraphics[width=.49\textwidth]%
    {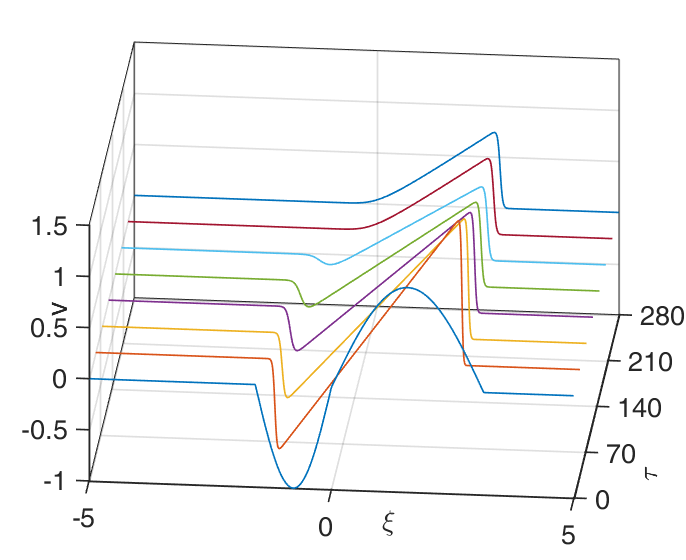}\label{fig:ETa}}
  \subfigure[]{\includegraphics[width=.49\textwidth]%
    {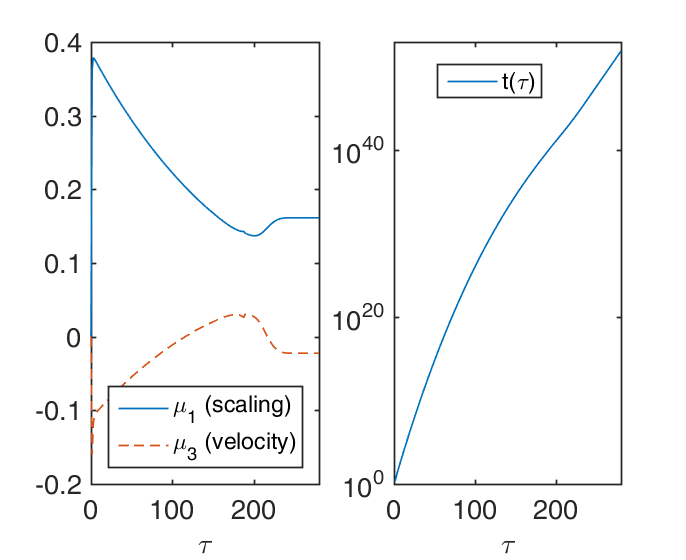}\label{fig:ETaa}}
  \caption{Time evolution for 1d-Burgers' equation, with viscosity
    $\nu=0.01$ in the scaled (computational) coordinates (a) and
    the evolution of the algebraic variables $\mu$ and of the
  original time as function of the scaled time $t(\tau)$.}
  \label{fig:ET}
\end{figure} 
Our results are presented in Fig.~\ref{fig:ET} for viscosity $\nu=0.01$.
Note that there is a very long transient, when the solution pretty much
looks like an N-wave (see Fig.~\ref{fig:ETa} until $\tau\approx 200
\hat{=}t\approx 10^{41}$) and then in a final stage converges to the true
similarity solution, which is a viscosity wave.  This convergence can
also be observed by looking at the time evolution of the algebraic variable
$\mu$, which evolves slowly first and finally stabilizes to a 
constant value.

\subsection{2d-Experiments}
We also apply the method to the
two-dimensional generalized Burgers' equations
\[\partial_t u=\nu \Delta
u-\tfrac{1}{p}\partial_x\bigl(|u|^p\bigr).\]
Again we use the numerical scheme introduced in
\cite{Rottmann:2016b}.  Moreover, we choose the
orthogonal phase condition, which is slightly more efficient than the fixed
phase condition.

\textbf{Metastable behavior.}
\begin{figure}[h!tb]
  \centering
  \subfigure[]{\includegraphics[width=.65\textwidth]%
    {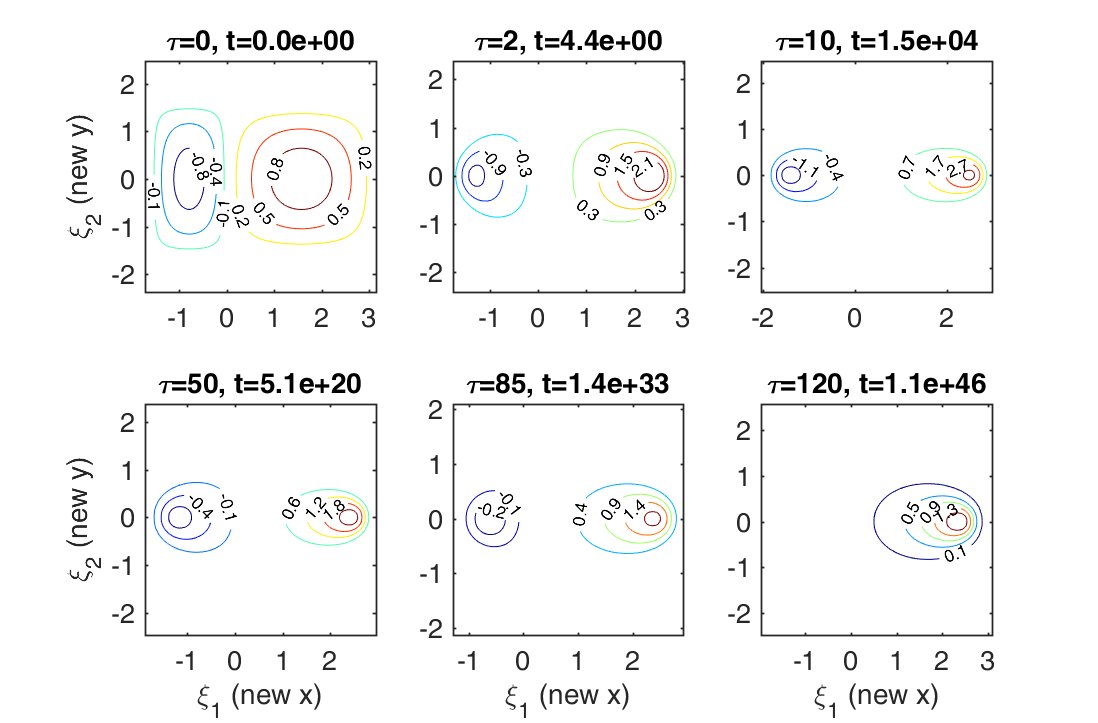}\label{fig:2d01}}%
  \subfigure[]{\includegraphics[width=.33\textwidth,height=5cm]%
    {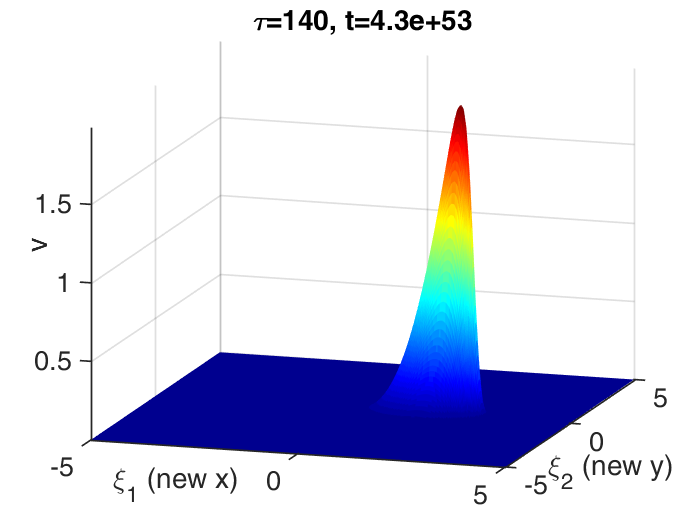}\label{fig:2d01_1}}
  \caption{Time Evolution of the 2d-Burgers' Equation in co-moving
    coordinates for $\nu=0.05$.
    In (a) we plot the contour lines at different time instances (in
    the scaled time) and 
    in (b) we show the solution at the final time $\tau=140$ ($\hat{=}$
    $t\approx 4.3\cdot 10^{53}$).}
  \label{fig:2d01a}
\end{figure}
First we consider the conservative 2-dimensional Burgers' equation, i.e.\
$p=\tfrac{3}{2}$ with very small viscosity $\nu=0.05$.  For the actual
computation with the freezing method we choose the computational domain
$\xi\in[-5,5]\times[-5,5]$ and no-flux boundary conditions.
The spatial step sizes are $\Delta \xi_1=\Delta
\xi_2=\tfrac{1}{15}$ and the time step size $\Delta\tau$ is CFL-base multiple of
these, see \cite{Rottmann:2016b}.
In Fig.~\ref{fig:2d01} we present contour plots of the solution at different
time instances and one observes that rapidly a pattern evolves which resembles
the 1d pattern of an N-wave.  This pattern exists for a very long
time until the ``negative blob'' vanishes and a final steady state is reached.
This happens approximately at $\tau=110$ ($\hat{=} t=1.69\cdot 10^{42}$).  
A plot of this final state is shown in Fig.~\ref{fig:2d01_1}.
\begin{figure}[b]
  \centering
  \subfigure[]{\includegraphics[width=.33\textwidth]%
    {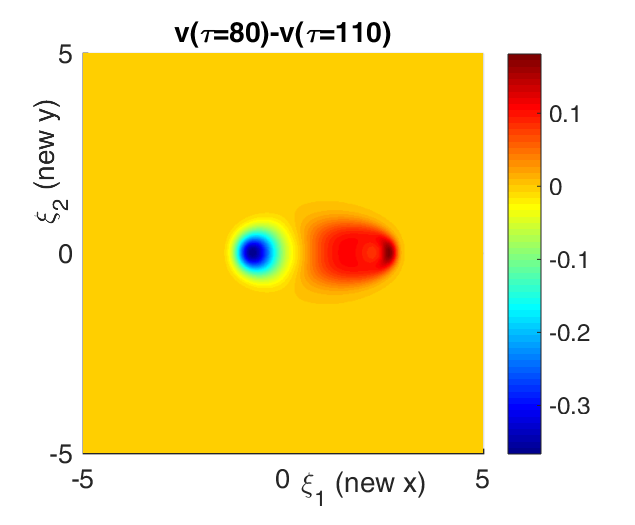}\label{fig:2d01_3a}}%
  \subfigure[]{\includegraphics[width=.33\textwidth]%
    {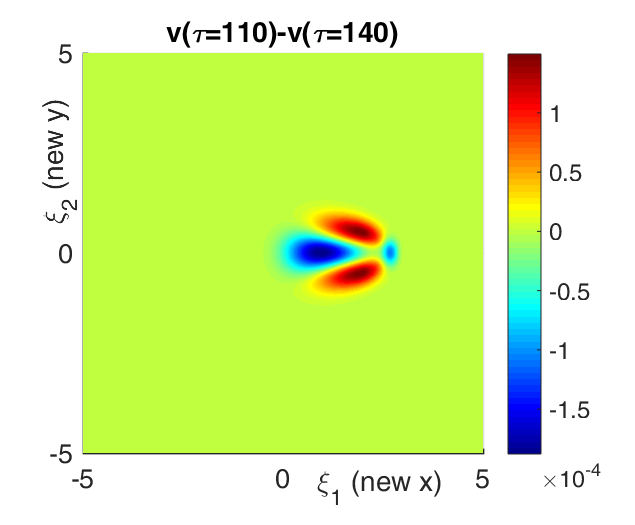}\label{fig:2d01_3b}}%
  \subfigure[]{\includegraphics[width=.33\textwidth]%
    {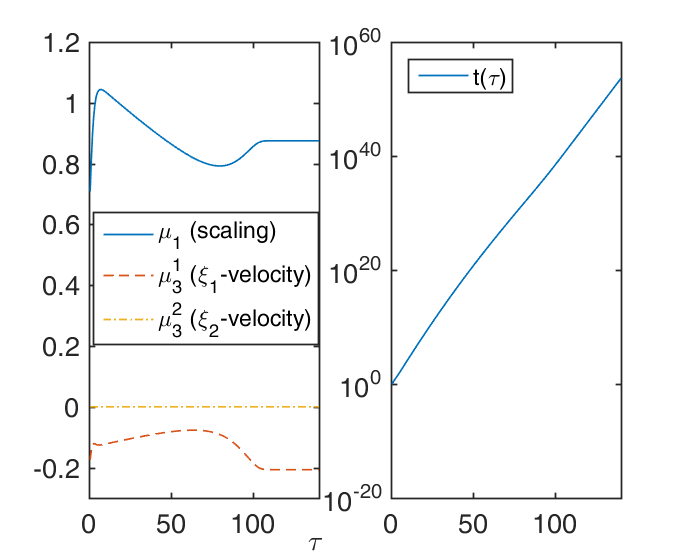}\label{fig:2d01_2}}%
  \caption{Convergence to relative equilibrium: In (a) and (b) 
    we plot the difference of the solutions to the freezing equations for
    different times $\tau$.  In (c) the evolution of the 
    evolution of the algebraic variables and the
  original time is shown.} %
  \label{fig:2d01b}
\end{figure}
Fig.~\ref{fig:2d01b} shows that this profile indeed is a steady state:
The difference of the solution to the freezing
method at $\tau=80$ ($\hat{=}t=2.60\cdot 10^{31}$) and $\tau=110$
($\hat{=}t=1.69\cdot 10^{42}$) is plotted in Fig.~\ref{fig:2d01_3a}. Note that
there is a large negative
area where the solutions differ the most (actually their values differ by
approximately $-0.37$).  This area corresponds to the ``negative blob'' which
vanishes as $\tau$ increases (Fig.~\ref{fig:2d01}).  When comparing the
solutions at $\tau=110$ ($\hat{=}t=1.69\cdot 10^{42}$) and $\tau=140$
($\hat{=}t=4.27\cdot 10^{53}$), this spot indeed has vanished and the difference
of these two solutions is only of order $10^{-4}$.  Also the 
algebraic variables, shown in Fig.~\ref{fig:2d01_2},
slowly evolve until $\tau\approx 110$, when they finally stabilize.

\textbf{Variation of the parameter $p$.}
We finally consider the long-time behavior of Burgers' equation with fixed
viscosity $\nu=0.4$ and vary the parameter $p$.
For the experiments we choose the initial condition
\[u_0(x,y)=\begin{cases} \sin(2x)\sin(y),& -\tfrac{\pi}{2}<x<0,\;0<y<\pi,\\
    \sin(x)\cos(y),&0<x<\pi,\;-\tfrac{\pi}{2}<y<\tfrac{\pi}{2},\\
  0,&\text{otherwise},\end{cases}\]
which is depicted in Fig.~\ref{fig:2d02_init}.

For the conservative Burgers' equation, i.e.\ $p=\tfrac{3}{2}$, we again observe
the convergence to a steady state of the freezing equations and hence to a
relative equilibrium.  
\begin{figure}[t!b]
  \centering
  \subfigure[]{\includegraphics[width=.33\textwidth]%
    {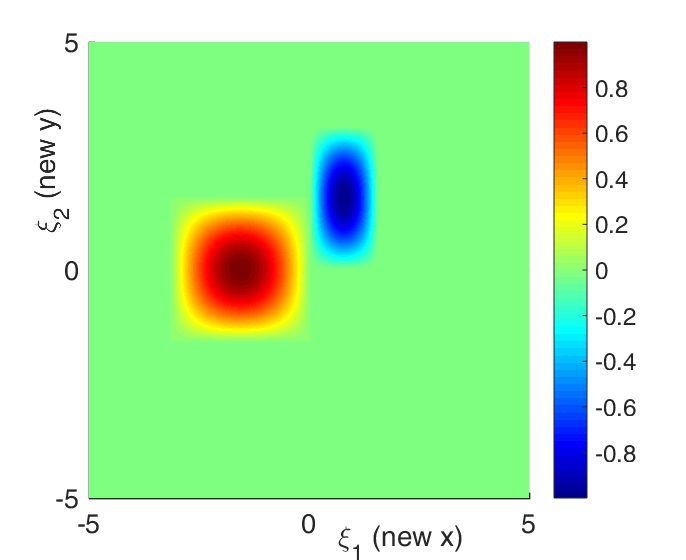}\label{fig:2d02_init}}%
  \subfigure[]{\includegraphics[width=.33\textwidth]%
    {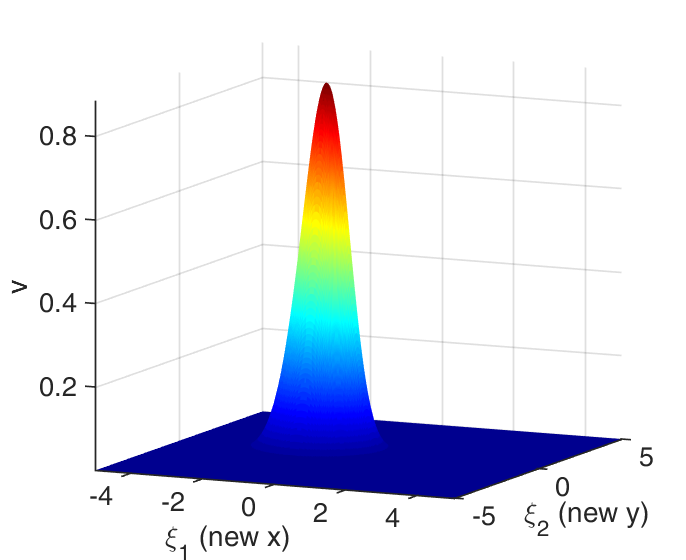}\label{fig:2d02_a}}%
  \subfigure[]{\includegraphics[width=.33\textwidth]%
    {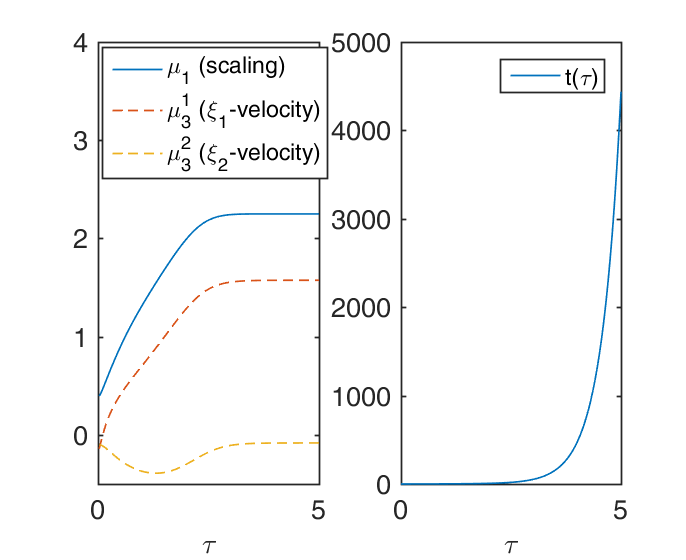}\label{fig:2d02_c}}%
    \caption{Freezing method for $u_t=0.4\laplace
      u-\tfrac{2}{3}\partial_x \bigl(|u|^{\tfrac{3}{2}}\bigr)$:
      (a) shows the initial condition, (b) the steady state obtained at
      $\tau=5$, and (c) shows the evolution of the algebraic variables and
    original time as functions of $\tau$.}
    \label{fig:2d02}
\end{figure}
In Figure~\ref{fig:2d02} we plot the final state of the
calculation at $\tau=5$ which corresponds to $t\approx4436.7$.  Note that the
final state has a non-constant $\xi_2$-velocity, which is due to
the fact that the function does not have a center of mass along the
$\xi_1$-axis.

%
For the standard 2d-Burgers' equation, that is $p=2$, we obtain
that the solution $(v,\mu)$ of the freezing method does not converge to a
localized steady state but the total mass of $v$ decays as predicted
by Remark~\ref{rem:nonSim}.  Nevertheless, it is possible and does make sense to
compute the solution to the original equation by using the freezing method on a
fixed computational domain.  
The final state at $\tau=5$ ($\hat{=}t=11$) and its reconstruction to the
original coordinates are shown in Fig.~\ref{fig:2d04}.
\begin{figure}[htb]
  \centering
  \subfigure[]{\includegraphics[width=.33\textwidth]%
    {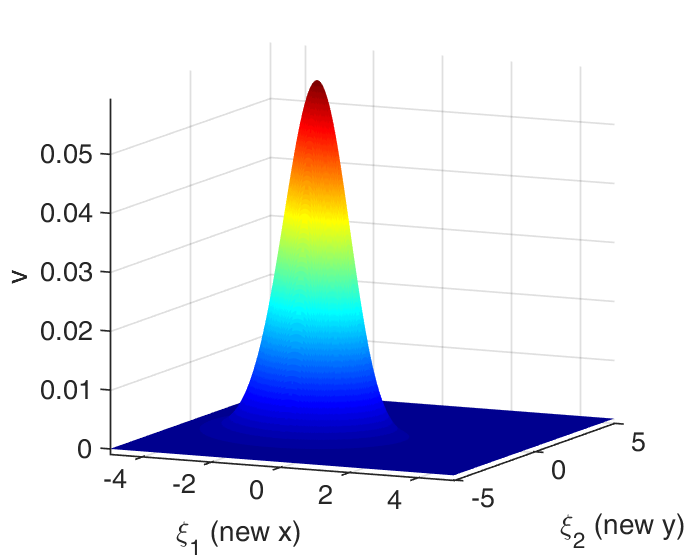}\label{fig:2d04_a}}%
  \subfigure[]{\includegraphics[width=.33\textwidth]%
    {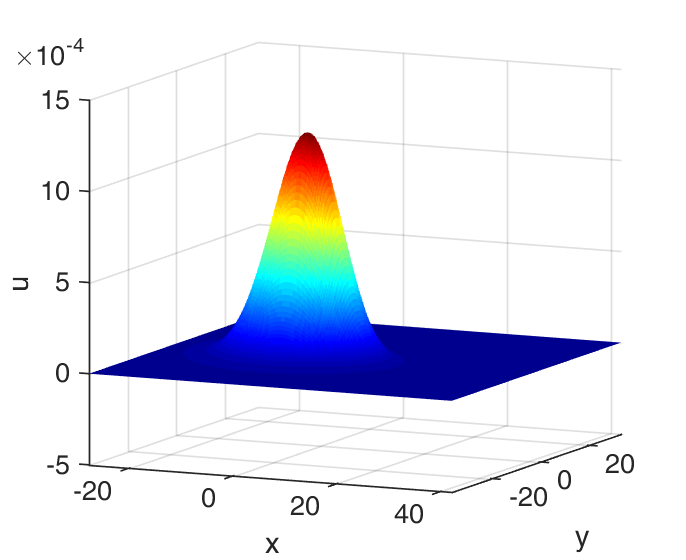}\label{fig:2d04_b}}%
  \subfigure[]{\includegraphics[width=.33\textwidth]%
    {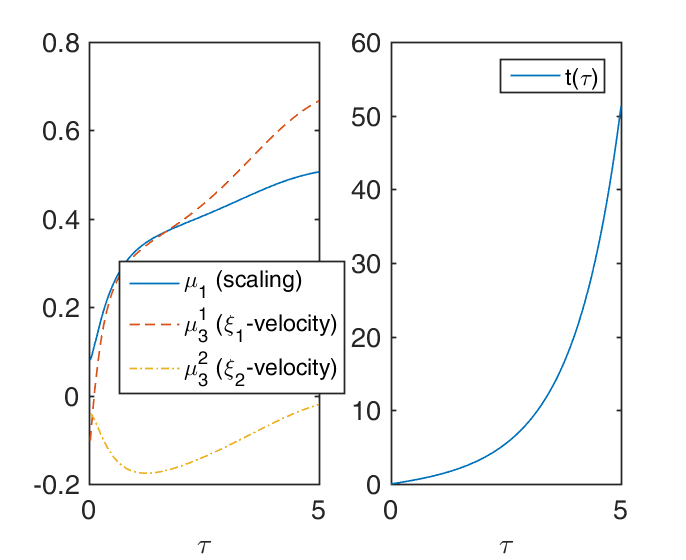}\label{fig:2d04_c}}%
    \caption{Solution of the freezing method at $\tau=5$ for the 2d-Burgers'
      equation $u_t=0.4\laplace u-\tfrac{1}{2}\partial_x\bigl(|u|^2\bigr)$.  In
    (a) in the computational coordinates, in (b) reconstruction of the solution,
  and evolution of the algebraic variables in (c).}\label{fig:2d04}
\end{figure}


\bibliographystyle{abbrv}
\bibliography{LiteraturNeu.bib}

\end{document}